\newcommand{\ol}{\overline}
\newcommand{\nc}{\newcommand}
\newcommand{\mc}{\mathcal}
\nc{\on}{\operatorname}
\nc{\h}{\mathfrak{h}}
\nc{\g}{\mathfrak{g}}
\nc{\n}{\mathfrak{n}}
\nc{\ch}{\on{CH}}
\nc{\wt}{\widetilde}
\nc{\F}{\mc{F}}
\nc{\C}{\mc{C}}
\nc{\M}{\mc{M}}
\nc{\T}{\mc{T}}
\renewcommand{\H}{\on{H}}
\nc{\G}{\mc{G}}
\nc{\Gb}{\overline{G}}
\nc{\VFun}{\on{Vect}(\fun)}
\nc{\FF}{\mathbb{F}}
\nc{\HH}{\mathbb{H}}
\nc{\Amod}{$A$-{mod}}
\renewcommand{\H}{\mathbb{H}}
\nc{\Pone}{\mathbb{P}^1}
\nc{\Aone}{\mathbb{A}^1}
\nc{\fun}{\mathbb{F}_1}
\nc{\mf}{\mathfrak}
\nc{\slthat}{\widehat{\mf{sl}}_2}
\nc{\p}{\mathfrak{p}}
\nc{\spec}{\on{Spec}}
\nc{\Msch}{\mc{M}sch}
\nc{\fm}{\langle t \rangle}
\nc{\A}{\on{A}}
\theoremstyle{definition}
\newtheorem{theorem}{Theorem}
\newtheorem{proposition}{Proposition}
\newtheorem{lemma}{Lemma}
\newtheorem{definition}{Definition}
\newtheorem{remark}{Remark}
\theoremstyle{definition}
\newtheorem{example}{Example}
\begin{document}

\title{On the Hall algebra of semigroup representations \\ over $\fun$}
\author{Matt Szczesny} 
\address{Department of Mathematics  and Statistics, 
         Boston University, Boston MA, USA}
\email{szczesny@math.bu.edu}

\begin{abstract}
Let $\A$ be a finitely generated semigroup with $0$. An $\A$--module over $\fun$ (also called an $\A$--set), is a pointed set $(M,*)$ together with an action of $\A$. We define and study the Hall algebra $\H_{\A}$ of the category $\C_{\A}$ of finite $\A$--modules. $\H_{\A}$ is shown to be the universal enveloping algebra of a Lie algebra $\n_{\A}$, called the \emph{Hall Lie algebra} of $\C_{\A}$.  In the case of the $\fm$ - the free monoid on one generator $\fm$, the Hall algebra (or more precisely the Hall algebra of the subcategory of nilpotent $\fm$-modules) is isomorphic to Kreimer's Hopf algebra of rooted forests. This perspective allows us to define two new commutative operations on rooted forests. We also consider the examples when $\A$ is a quotient of $\fm$ by a congruence, and the monoid $G \cup \{ 0\}$ for a finite group $G$. 
\end{abstract}
\maketitle

\medskip

\section{introduction}

The aim of this paper is to define and study the Hall algebra of the category of set-theoretic representations of a semigroup. Classically, Hall algebras have been studied in the context abelian categories linear over finite fields $\mathbb{F}_q$. Given such a category $\mc{A}$, \emph{finitary} in the sense that $\on{Hom}(M,N)$ and $\on{Ext}^{1}(M,N)$ are finite-dimensional $\forall \; M,N \in \mc{A}$ (and therefore finite sets), we may construct from $\mc{A}$ an associative algebra $\HH_{\mc{A}}$ defined over $\mathbb{Z}$ \footnote{It is common to include a twist which makes this algebra over $\mathbb{Q}(\nu)$, where $\nu^2 = q$ }, called the Ringel-Hall algebra of $\mc{A}$. As a $\mathbb{Z}$--module, $\HH_{\mc{A}}$ is freely generated by the isomorphism classes of objects in $\mc{A}$, and its structure constants are expressed in terms of the number of extensions between objects.  Explicitly, if $\ol{M}$ and $\ol{N}$ denote two isomorphism classes, their product in $\HH_{\mc{A}}$ is given by 
\begin{equation} \label{hallmult}
\ol{M} \star \ol{N} =  \sum_{ \ol{R} \in \on{Iso}(\mc{A}) } \mathbb{P}^R_{M,N}  \; \ol{R}
\end{equation}
where $\on{Iso}(\mc{A})$ denotes the set of isomorphism classes in $\mc{A}$, and
\begin{equation} \label{sc}
\mathbb{P}^R_{M,N} = \# \vert \{ L \subset R, L \simeq N, R/L \simeq M \} \vert
\end{equation}
Denoting by $\on{Aut}(M)$ the automorphism group of $M \in \on{Iso}(\mc{A})$, it is easy to see that
\[
\mathbb{P}^R_{M,N} \vert \on{Aut}(M) \vert \vert \on{Aut}(N) \vert
\]
counts the number of short exact sequences of the form
\begin{equation} \label{ses}
0 \rightarrow N \rightarrow R \rightarrow M \rightarrow 0,
\end{equation}
showing that $\H_{\mc{A}}$ encodes the structure of extensions in $\mc{A}$. Under additional assumptions on $\mc{A}$, $\HH_{\mc{A}}$ can be given the structure of a Hopf algebra (see \cite{S}). 

A closer examination of formula (\ref{hallmult}) and the description of $\mathbb{P}^R_{M,N}$ in terms of short exact sequences (\ref{ses}) reveals that it makes sense in situations where $\mc{A}$ is not abelian, or even additive. It suffices that
 $\mc{A}$ be an exact category in the sense of Quillen, satisfying certain finiteness conditions (see \cite{Hub}), or possibly a non-additive analogue thereof (see \cite{DK} for a very general framework).   One such example is the category of set-theoretic representations of a semigroup (for other examples of Hall algebras in a non-additive context see \cite{KS, Sz1, Sz2, Sz3}). Given a finitely generated semigroup $A$ possessing and absorbing element $0$, we define a (finite) \emph{$\A$--module} to be a finite pointed set $(M,*)$ equipped with an action of $\A$. Maps between $\A$-modules are defined to be maps of pointed sets compatible with the action of $\A$, and we denote the resulting category by $\C_{\A}$. $\C_{\A}$ possesses many of the good properties of an abelian category, such as the existence of zero object, small limits and co-limits, and in particular kernels and co-kernels. We may therefore talk about short exact sequences in $\C_{\A}$.

 At the same time, $\C_{\A}$ differs crucially from an abelian category in that it is not additive, and morphisms $f: M \rightarrow N$ are not necessarily \emph{normal}, meaning that the natural map $Im(f) \rightarrow coim(f)$ is not in general an isomorphism. This means that the correspondence between the definition of $\mathbb{P}^R_{M,N}$ in (\ref{sc}) and the count of short exact sequences breaks down. In fact, given $M, N$ in $\C_{\A}$, there will in general be infinitely many distinct short exact sequences of the form (\ref{ses}). We may however pass to the subcategory $\C^N_{\A}$ of $\C_{\A}$ consisting of the same objects, but with only normal morphisms. The requirement that $f$ be normal is easily seen to be equivalent to the property that the fiber $f^{-1}(n)$  over an element $n \in N$ contain at most one element with the exception of $f^{-1}(*)$. If $L \subset R$ is a sub-module, then all morphisms in the short exact sequence
 \[
 0 \rightarrow L \rightarrow R \rightarrow R/L \rightarrow 0
 \]
 are normal, and conversely, any normal short exact sequence corresponds to the data in (\ref{sc}) (up to automorphisms of the kernel and co-kernel). Thus, in $\C^N_{\A}$, the correspondence between the two descriptions of $\mathbb{P}^R_{M,N}$ which holds in the abelian setting is restored. Furthermore, in $\C^N_{\A}$ there are only finitely many extensions between any two objects, and so we may define $\H_{\C^N_{\A}}$ as in the abelian case. $\H_{\C^N_{\A}}$ may be further equipped with a co-commutative co-multiplication and antipode (after tensoring with $\mathbb{Q}$), which gives it the structure of a graded connected co-commutative Hopf algebra. The Milnor-Moore theorem shows that $\H_{\C^N_{\A}}$ is isomorphic to the enveloping algebra $\mathbb{U}(\n_{\C^N_{\A}})$ of the Lie algebra $\n_{\C^N_{\A}}$ of its primitive elements, which correspond to indecomposable $A$-modules.  To summarize:
 
 \bigskip
 \begin{theorem}
 There is an isomorphism of Hopf algebras $\H_{\C^{N}_{\A}} \simeq \mathbb{U}(\n_{\C^N_{\A}})$, where $\H_{\C^{N}_{\A}}$ is the Hall algebra of the category $\C^N_{\A}$ of finite $\A$--modules with normal morphisms, and $\n_{\C^N_{\A}}$ is the Lie sub-algebra spanned by indecomposable $A$--modules.
 \end{theorem}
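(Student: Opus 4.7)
The plan is to equip $\H_{\C^N_\A}$ (after tensoring with $\mathbb{Q}$) with the structure of a graded, connected, cocommutative Hopf algebra and then invoke the Milnor--Moore theorem. The multiplication is the one from (\ref{hallmult}), with $\mathbb{P}^R_{M,N}$ now genuinely enumerating sub-modules $L \subset R$ satisfying $L \simeq N$ and $R/L \simeq M$, which is legitimate precisely because morphisms in $\C^N_\A$ are normal. Associativity follows from the standard double-counting: both $(\bar M \star \bar N) \star \bar P$ and $\bar M \star (\bar N \star \bar P)$ expand as sums over length-two flags $L_1 \subset L_2 \subset R$ with prescribed subquotients, and the pointed-set analogues of the isomorphism theorems make the two expansions agree. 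A grading is given by $\dim M := |M| - 1$; since a normal short exact sequence satisfies $|R| = |M| + |N| - 1$, the grading is additive under $\star$, and the unique isomorphism class of dimension $0$ is the zero module, which serves as unit. Hence $\H_{\C^N_\A}$ is graded connected.

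For the coproduct I take decomposition into direct summands,
\[
\Delta(\bar M) = \sum_{M \simeq N \oplus L} \bar N \otimes \bar L,
\]
where the sum runs over ordered direct-sum decompositions, normalized by automorphisms so as to count each isomorphism class with the appropriate multiplicity. Coassociativity is immediate from associativity of $\oplus$, cocommutativity from the symmetry $N \oplus L \simeq L \oplus N$, and the counit is projection onto the degree-$0$ component. The main technical obstacle is verifying the bialgebra axiom $\Delta(\bar M \star \bar N) = \Delta(\bar M) \star \Delta(\bar N)$: given an extension $R$ of $M$ by $N$, one must show that direct-summand decompositions of $R$ correspond, with the correct multiplicities, to paired decompositions $M \simeq M_1 \oplus M_2$, $N \simeq N_1 \oplus N_2$ together with extensions of each $M_i$ by $N_i$. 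This rigidity, which fails in general abelian categories, should here reduce to the observation that every $\A$-module decomposes canonically into connected components of $M \setminus \{*\}$ under the $\A$-action, so that both sub-modules and direct summands are forced to be unions of such components.

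The antipode is then provided automatically by the standard inductive formula on a graded connected bialgebra over $\mathbb{Q}$, and Milnor--Moore delivers an isomorphism of Hopf algebras $\H_{\C^N_\A} \simeq \mathbb{U}(\n_{\C^N_\A})$, where $\n_{\C^N_\A}$ is the Lie algebra of primitives under the commutator bracket. To identify $\n_{\C^N_\A}$ with the span of indecomposable $\A$-modules, one notes that $\Delta(\bar M) = \bar M \otimes 1 + 1 \otimes \bar M$ holds if and only if $M$ admits no non-trivial direct-sum decomposition, i.e.\ $M$ is indecomposable. Conversely, the canonical orbit decomposition of $M \setminus \{*\}$ gives a Krull--Schmidt-type uniqueness of the decomposition into indecomposables, so the indecomposables form a basis of the primitive subspace, completing the identification.
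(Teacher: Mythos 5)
Your argument follows essentially the same route as the paper: the product counts submodules with prescribed sub and quotient, the coproduct is dual to direct sum, the result is a graded connected cocommutative bialgebra (hence automatically a Hopf algebra), and Milnor--Moore plus the Krull--Schmidt-type decomposition into connected components identifies the primitives with the classes of indecomposables. Two small repairs: finiteness of the structure constants requires $\A$ to be finitely generated (Lemma \ref{finitary}), and in the compatibility check a submodule $L$ of $R = R_1 \oplus R_2$ is \emph{not} a union of indecomposable components --- it merely satisfies $L = (L \cap R_1) \oplus (L \cap R_2)$ (Remark \ref{subobj}), which together with $(R_1 \oplus R_2)/L \simeq R_1/(L\cap R_1) \oplus R_2/(L \cap R_2)$ is the fact that actually drives the correspondence between extensions of $R$ and paired extensions of its summands.
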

 \bigskip

This construction may be re-cast in the yoga of the "field with one element", denoted $\fun$ (for more on $\fun$, see \cite{KapS, H, M, D, D2, CLS, CC1, CC2, Sou, Tv}). The basic principle in working with $\fun$ is that one loses any additive structure and only multiplication remains. Semigroups are therefore analogues of (possibly non-unital) algebras over $\fun$, monoids analogues of unital algebra, and pointed sets analogues of vector spaces. The study of semigroup actions on pointed sets is therefore the $\fun$--analogue of the representation theory of algebras over a field. The above result shows that the category of representations of "an algebra over $\fun$" leads, via the Hall algebra construction, to a Hopf algebra in a way analogous to the situation over $\mathbb{F}_q$. 

The study of $\H_{\C^N_A}$ turns out to be interesting already in the case when $A$ is the free monoid $\fm$ on one generator, i.e.
\[
\fm := \{ 0, 1, t, t^2, t^3, \cdots \}.
\]
The study of $\fm$--modules may be seen as linear algebra over $\fun$, since given a field $k$, the monoid algebra over $k$ is the polynomial ring $k[t]$. To a $\fm$--module $M$ we may attach a directed graph $\Gamma_M$ whose vertices correspond to (non-zero) elements of $M$, and whose directed edges are $\{ m \rightarrow t \cdot m \}$. We give a description of the possible graphs, and identify the nilpotent $\fm$--modules with rooted forests. The latter form a full subcategory $\C^{N}_{\fm, nil}$ of $\C^N_{\fm}$, and we show that the Hall algebra $\H^{N}_{\fm, nil}$ is isomorphic to the dual of Kreimer's Hopf algebra of rooted trees (\cite{Kre}), which encodes the combinatorial structure of perturbative renormalization in quantum field theory. To summarize:

\bigskip
\begin{theorem}
There is a Hopf algebra isomorphism $\H_{\C^N_{\fm,nil}} \simeq \H^*_K$ where $\H_{\C^N_{\fm,nil}}$ is the Hall algebra of the category of finite nilpotent $\fm$--modules, and $\H_K$ is Kreimer's Hopf algebra of rooted trees. 
\end{theorem}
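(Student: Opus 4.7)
The plan is to build an explicit dictionary between finite nilpotent $\fm$-modules and rooted forests, translate the data $(L\subset R,\ L\simeq N,\ R/L\simeq M)$ controlling the Hall structure constants $\mathbb{P}^R_{M,N}$ into admissible cuts of forests, and then use the previous theorem together with Milnor--Moore to upgrade the resulting matching of algebra structure constants to a Hopf algebra isomorphism.

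To every finite nilpotent $\fm$-module $M$ attach the directed graph $\Gamma_M$ with vertex set $M\setminus\{*\}$ and an edge $m\to t\cdot m$ whenever $t\cdot m\neq *$. Since $t$ acts as a function, each vertex has out-degree at most one, and nilpotency excludes oriented cycles, so $\Gamma_M$ is a rooted forest (with roots the $m$ satisfying $t\cdot m=*$). One checks that this descends to a bijection between isomorphism classes of finite nilpotent $\fm$-modules and of finite rooted forests, under which indecomposable modules correspond to rooted trees. The key observation is now that a sub-$\fm$-module $L\subset R$ is exactly a pointed subset of $R$ closed under $t$; equivalently, $L$ is a sub-forest of $\Gamma_R$ closed under taking parents, which is exactly the trunk $R_c$ of an admissible cut $c$ of $\Gamma_R$, and $R/L$ is then the corresponding pruning $P_c$. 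It follows that $\mathbb{P}^R_{M,N}$ equals the number of admissible cuts of $\Gamma_R$ with trunk isomorphic to $N$ and pruning isomorphic to $M$.

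Dualizing Kreimer's coproduct $\Delta(R)=\sum_c P_c\otimes R_c$ produces precisely this integer as the coefficient of the dual basis vector of $R$ in the product of the dual basis vectors of $M$ and $N$ inside $\H^*_K$. Hence the map sending $\ol{M}$ to the basis vector of $\H^*_K$ dual to $M$ extends $\mathbb{Q}$-linearly to an algebra isomorphism $\varphi:\H_{\C^N_{\fm,nil}}\to\H^*_K$. Both sides are connected graded cocommutative Hopf algebras. By the previous theorem, $\H_{\C^N_{\fm,nil}}\simeq \mathbb{U}(\n_{\C^N_{\fm,nil}})$ with $\n_{\C^N_{\fm,nil}}$ spanned by rooted trees, while on the Kreimer side the dual basis vector of a tree $T$ is primitive in $\H^*_K$, since Kreimer multiplication is disjoint union of forests and $T$ being connected forces any such splitting to be trivial. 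Therefore $\varphi$ carries primitives bijectively to primitives, and since the Hopf structure on an enveloping algebra is determined by its Lie algebra of primitives, Milnor--Moore upgrades $\varphi$ to the desired Hopf algebra isomorphism.

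The main technical obstacle I anticipate is the bookkeeping in the middle step: one must verify that the bijection between sub-$\fm$-modules and admissible cuts respects isomorphism classes on the nose, so that $\#\{L\subset R : L\simeq N,\ R/L\simeq M\}$ and $\#\{c : R_c\simeq N,\ P_c\simeq M\}$ genuinely agree as integers without any automorphism factors creeping in. Once this step is clean, the algebra isomorphism is immediate from the two definitions of Hall and Kreimer structure constants, and the remaining Hopf-algebraic content is soft, relying only on the previous theorem and the standard fact that rooted trees are primitive in $\H^*_K$.
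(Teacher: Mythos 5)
Your proposal is correct and follows essentially the same route as the paper: identify finite nilpotent $\fm$--modules with rooted forests (indecomposables with trees), use the lemma identifying submodules with trunks of admissible cuts so that the Hall structure constants $\mathbb{P}^R_{M,N}$ become cut-counting coefficients dual to Kreimer's coproduct, and invoke Milnor--Moore. The only cosmetic difference is that the paper compares the induced Lie brackets on the span of trees and lets Milnor--Moore produce the isomorphism of enveloping algebras, whereas you match all structure constants directly and then verify that primitives go to primitives; both hinge on exactly the same submodule/admissible-cut dictionary.
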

\bigskip

The monoid $\fm$ is the path monoid of the Jordan quiver, and so this result may also be interpreted in the context of quiver representations over $\fun$.  It is worth remarking that over a finite field $\mathbb{F}_q$, the Hall algebra of nilpotent representations of the Jordan quiver is isomorphic to the Hopf algebra of symmetric functions $\Lambda$ (see \cite{S}). $\H^*_K$ is therefore an $\fun$ analogue of $\Lambda$. 

This paper is organized as follows. In section \ref{semigroups} we recall basic facts about semigroups, monoids, and the category $\C_{\A}$ of their set-theoretic representations, as well as the normal sub-category $\C^N_{\A}$, which is better adapted to the Hall algebra construction. 
We also define "representation rings" $\on{Rep}^{\wedge}(\A), \on{Rep}^{\otimes}(\A)$. Section \ref{Hall_Alg} is devoted to the definition of the Hall algebra $\H_{\C^N_{\A}}$ and proof of Theorem 1. Section \ref{examples} contains the main examples. In \ref{fm} we consider the case of the free monoid $\fm$, and relate $\H_{\C^N_{\fm}}$ to Kreimer's Hopf algebra of rooted trees. Section \ref{fm_cong} we consider the case of quotients $\fm/ \sim$ by a congruence. Finally, \ref{G} looks at the case of the monoid ${G} \cup \{ 0 \}$, where $G$ is a finite group. 

\bigskip

\noindent {\bf Acknowledgements:} I would like to thank Oliver Lorscheid and Anton Deitmar for valuable conversations. I'm especially grateful to Dirk Kreimer for his hospitality during my visit to HU Berlin where part of this paper was written, as well as many interesting ideas.  

\setcounter{theorem}{0}
\section{semigroups, monoids, and their modules} \label{semigroups}

In this section, we recall some basic properties related to semigroups, monoids, and their (set-theoretic) representations. Good references are \cite{KKM, CLS}. We will quite often represent semigroup actions by graphs, and so start by fixing notation. Given a (possibly directed) graph $\Gamma$, we denote by $V(\Gamma)$ its vertex set, and by $E(\Gamma)$ its edge set. For a directed edge $e \in E(\Gamma)$, let $s(e)$ and  $t(e)$ denote its initial and terminal vertices respectively. 

\begin{definition}
By a \emph{semigroup} we will always mean a multiplicatively written semigroup $\A$ with absorbing element $0$, satisfying
\[
0 \cdot a = a \cdot 0 = 0  \; \forall a \in \A.
\]
A \emph{monoid} is semigroup $\A$ together with identity element $1$, satisfying
\[
1 \cdot a = a \cdot 1 = a \; \; \forall a \in \A.
\]
A \emph{morphism} $f: \A \rightarrow \on{B} $ of semigroups is a multiplicative map preserving $0$. If $\A$ and $\on{B}$ are monoids, we require the map to preserve $1$ as well.  
\end{definition}

\noindent We denote the category of semigroups by $\M_0$, and view it as the $\fun$--analogue of the category of associative  (but not necessarily unital) algebras. Given a general semigroup $\on{B}$ not necessarily possessing $1$, we may adjoin to it an identity obtain a monoid $\A = \on{B} \cup \{1\}$ in the above sense (by the same procedure, we may adjoin to a general semigroup not possessing an absorbing element a $0$).  We say that $\A$ is \emph{finitely generated} if there exists a finite collection $\{ a_1, \cdots, a_n \} \in \A$ such that every element of $\A$ can be written as word in $a_1, \cdots, a_n$. 

\bigskip

\noindent{Examples:}

\begin{enumerate}
\item The free semigroup $\langle x_1, \cdots, x_n \rangle$ on $n$ generators, which consists of $0$ and all words in the letters $x_1, \cdots, x_n$ under concatenation.
\item The monoid $\fun = \{ 0, 1 \}$, sometimes called the \emph{field with one element}. 
\item The free commutative monoid on one generator $\fm = \{ 0, 1, t, t^2, t^3, \cdots \}$
\item Any group $G$ is automatically a semigroup. We obtain a monoid $\overline{G} = G \cup \{ 0 \}$ by adjoining $0$. 
\item For a ring $R$, we obtain its multiplicative monoid $R^{\times}$ by forgetting the additive structure. 
\end{enumerate}
 
Given a ring $R$, there exists a base-change functor
\begin{equation} \label{mb_change}
\otimes_{\fun} R : \M_0 \rightarrow R-alg
\end{equation}
to the category of $R$--algebras, defined by setting
\[
A \otimes_{\fun} R := R[\A] 
\]
where $R[A]$ is the monoid algebra:
\[
R[\A] := \left\{ \sum r_i a_i | a_i \in \A, r_i \in R \right\}/ \langle 0 \rangle
\]
with multiplication induced from the monoid multiplication. 

\medskip
\begin{definition}
A \emph{congruence} on a semigroup $\A$ is an equivalence relation $\sim$ on $\A$ such that if $x,y,u,v \in \A$ and $x \sim y,  u \sim v$,  then $xu \sim yv$. We denote by $\ol{x}$ the image of $x \in \A$ in $\A / \sim$.  $\A/\sim$ inherits a semigroup structure with $\ol{x} \cdot  \ol{y} := \ol{xy}$. 
\end{definition}

\medskip

\begin{example} \label{cong_ex}
Consider the free monoid $\fm$. For any $x \in \fm, n \in \mathbb{N}$, the equivalence relation generated by $t^{k+n} \sim t^k x, \; k \geq 0$ is a congruence on $\fm$. It is easy to see (see section \ref{fm_cong}) that every congruence on $\fm$ is of this form. 
\end{example}

\subsection{$\A$--modules} \label{Amodules}

\begin{definition}
Let $\A$ be a monoid. An \emph{$\A$--module} is a pointed set $(M,*)$ equipped with an action of $\A$. More explicitly, an $\A$--module structure on $(M.*)$ is given by a map
\begin{align*}
\A \times M  & \rightarrow M \\
(a, m) & \rightarrow a \cdot m
\end{align*}
satisfying 
\[
(a \cdot b)\cdot m = a \cdot (b \cdot m), \; \; \; 1 \cdot m = m, \; \; \; 0 \cdot m = *, \; \; \forall a,b, \in \A, \; m \in M
\]
\end{definition}
A \emph{morphism} of $\A$--modules is given by a pointed map $f: M \rightarrow N$ compatible with the action of $\A$, i.e. $f(a \cdot m) = a \cdot f(m)$. 
The $\A$--module $M$ is said to be \emph{finite} if $M$ is a finite set, in which case we define its \emph{dimension} to be $dim(M) = \vert M \vert -1$ (we do not count the basepoint, since it is the analogue of $0$). We say that $N \subset M$ is an \emph{$\A$--submodule} if it is a (necessarily pointed) subset of $M$ preserved by the action of $\A$. $\A$ always posses the trivial module $\{*\}$, which will be referred to as the \emph{zero module}.
\medskip

\noindent {\bf Note:} This structure is called an $\A$--act in \cite{KKM} and an \emph{$\A$--set} in \cite{CLS}. 

\medskip

We denote by $\C_{\A}$ the category of finite $\A$--modules. It is the $\fun$ analogue of the category of a finite-dimensional representations of a an algebra. In particular, a $\fun$--module is simply a pointed set, and will be referred to as a vector space over $\fun$. 

\medskip

Given a morphism $f: M \rightarrow N$ in $\C_{\A}$, we define the \emph{image} of $f$ to be $$Im(f) := \{ n \in N \vert \exists m \in M, f(m) = n \}.$$ For $M \in \C_{\A}$ and an $\A$--submodule $N \subset M$, the \emph{quotient} of $M$ by $N$, denoted $M/N$ is the $\A$--module $$ M/N :=  M \backslash N \cup \{* \}, $$ i.e. the pointed set obtained by identifying all elements of $N$ with the base-point, equipped with the induced $\A$--action. 

\bigskip

We recall some properties of $\C_{\A}$,  following \cite{KKM, CLS}, where we refer the reader for details:

\medskip

\begin{enumerate}
\item The trivial $\A$--module $0 = \{ * \}$ is an initial, terminal, and hence zero object of $\C_{\A}$. 
\item Every morphism $f: M \rightarrow N$ in $C_A$ has a kernel $Ker(f):=f^{-1}(*)$.
\item  Every morphism $f: M \rightarrow N$ in $C_A$ has a cokernel $Coker(f):=M/Im(f)$. 
\item The co-product of a finite collection $\{ M_i \}, i \in I$ in $\C_{\A}$ exists, and is given by the wedge product
$$
\bigvee_{i \in I} M_i = \coprod M_i / \sim
$$
where $\sim$ is the equivalence relation identifying the basepoints. We will denote the co-product of $\{M_i \}$ by $$\oplus_{i \in I} M_i$$
\item The product of a finite collection $\{ M_i \}, i \in I$ in $\C_{\A}$ exists, and is given by the Cartesian product $\prod M_i$, equipped with the diagonal $\A$--action. It is clearly associative. it is however not compatible with the coproduct in the sense that $M \times (N \oplus L) \nsimeq M \times N \oplus M \times L$.
\item The category $\C_{\A}$ possesses a reduced version $M \wedge N$ of the Cartesian product $M \times N$, called the smash product. $M \wedge N := M \times N / M \vee N$, where $M$ and $N$ are identified with the $\A$--submodules $\{ (m,*) \}$ and $\{(*,n)\}$ of $M \times N$ respectively. The smash product inherits the associativity from the Cartesian product, and is compatible with the co-product - i.e. $M \wedge (N \oplus L) \simeq M \wedge N \oplus M \wedge L$.
\item $\C_{\A}$ possesses small limits and co-limits. 
\item If $\A$ is commutative, $\C_{\A}$ acquires a monoidal structure called the \emph{tensor product}, denoted $M \otimes_{\A} N$, and defined by
\[
M \otimes_{\A} N := M \times N / \sim_{\otimes}
\]
where $\sim_{\otimes}$ is the equivalence relation generated by $(a \cdot m, n) \sim_{\otimes} (m, a \cdot n)$ for all $a \in \A, m \in M, n \in N$. Note that $(*,n) = (0 \cdot *, n) \sim_{\otimes} (*, 0 \cdot n) = (*,*)$, and likewise $(m,*) \sim_{\otimes} (*,*)$. This allows us to rewrite the tensor product as $M \otimes_{\A} N = M \wedge N/ \sim_{\otimes'}$, where $\sim_{\otimes'}$ denotes the equivalence relation induced on $M \wedge N$ by $\sim_{\otimes}$. We have 
\begin{align*}
M \otimes_{\A} N  & \simeq N \otimes_{\A} M, \\  (M \otimes_{\A} N) \otimes_{\A} L & \simeq M \otimes_{\A} (N \otimes_{\A} L), \\ \ M \otimes_{\A} (L \oplus N) & \simeq (M \otimes_{\A} L) \oplus (M \otimes_{\A} N). 
\end{align*}
\item Given $M$ in $\C_{\A}$ and $N \subset M$, there is an inclusion-preserving correspondence between flags $N \subset L \subset M$ in $\C_{\A}$ and $\A$--submodules of $M/N$ given by sending $L$ to $L/N$. The inverse correspondence is given by sending $K \subset M/N$ to $\pi^{-1} (K)$, where $\pi: M \rightarrow M/N$ is the canonical projection. This correspondence has the property that if $N \subset L \subset L' \subset M$, then $(L'/N)/(L/N) \simeq L'/L$.  \label{property9}
\end{enumerate}

\medskip

\noindent These properties suggest that $\C_{\A}$ has many of the properties of an abelian category, without being additive. It is an example of a \emph{quasi-exact} and \emph{belian} category in the sense of Deitmar and a \emph{proto-exact} category in the sense of Dyckerhof-Kapranov. Let $\on{Iso}(\C_{\A})$ denote the set of isomorphism classes in $\C_{\A}$, and by $\ol{M}$ the isomorphism class of $M \in \C_{\A}$.

\bigskip

\begin{definition}

\begin{enumerate}
\item We say that $M \in \C_{\A}$ is \emph{indecomposable} if it cannot be written as $M = N \oplus L$ for non-zero $N, L \in \C_{\A}$. 
\item We say $M \in \C_{\A}$ is \emph{irreducible} or \emph{simple} if it contains no proper sub-modules (i.e those different from $*$ and $M$). 
\end{enumerate}
\end{definition}

It is clear that every irreducible module is indecomposable. 
%Given $m \in  M$, denote by $\A \cdot m := \{ a \cdot m \vert a \in \A \}$ its orbit under the action of $\A$. It is clear that $M$ is indecomposable iff it consists of a single $\A$--orbit, and that in general $M$ can be uniquely written in the form $\oplus^{k}_{i = 1} \A \cdot m_i$ for $m_1, \cdots, m_k \in M$.
We have the following analogue of the Krull-Schmidt theorem:

\bigskip

\begin{proposition}
Every $M \in \C_{\A}$ can be uniquely decomposed (up to reordering) as a direct sum of indecomposable $\A$--modules. 
\end{proposition}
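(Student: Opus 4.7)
The key observation I will exploit is that the wedge $M \oplus N = M \vee N$ of $\A$-modules is, at the level of underlying pointed sets, simply the disjoint union of $M \setminus \{*\}$ and $N \setminus \{*\}$ together with a shared basepoint. Consequently a decomposition $M = \bigoplus_{i} M_i$ in $\C_\A$ is the same datum as a partition $M \setminus \{*\} = \bigsqcup_i B_i$ into nonempty blocks, each of which is \emph{$\A$-stable modulo the basepoint}, meaning that $a \cdot m \in B_i \cup \{*\}$ for every $m \in B_i$ and every $a \in \A$. The proposition thus reduces to showing that (i) there exists such a partition whose associated summands are indecomposable, and (ii) such a partition is unique.

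For existence I would induct on $\dim M = |M| - 1$. If $\dim M = 0$ the module is zero and the empty decomposition works; otherwise either $M$ is already indecomposable and we are done, or $M = N \oplus L$ with both $N, L$ nonzero. Since $\dim(N \oplus L) = \dim N + \dim L$, both summands have strictly smaller dimension and the inductive hypothesis finishes the job.

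For uniqueness I would identify a canonical partition. Define an equivalence relation $\sim$ on $M \setminus \{*\}$ as the transitive symmetric closure of the relation $m \approx m'$ whenever $a \cdot m = m'$ for some $a \in \A$ with $m, m' \neq *$. Every equivalence class $C$ is visibly $\A$-stable modulo $*$, so the partition into $\sim$-classes yields a decomposition $M = \bigoplus_C (C \cup \{*\})$, and each summand $C \cup \{*\}$ is indecomposable — any further splitting would separate two $\sim$-related elements, contradicting the very definition of $\sim$. Conversely, given any decomposition $M = \bigoplus_i N_i$ into indecomposables, the induced partition $\{N_i \setminus \{*\}\}$ is $\A$-stable modulo $*$, so the relation $\sim$ cannot cross between blocks; each $\sim$-class is therefore contained in a unique $N_i \setminus \{*\}$, and conversely each $N_i \setminus \{*\}$ is a union of $\sim$-classes. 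If such a union were nontrivial, the decomposition induced by its classes would contradict the indecomposability of $N_i$, so $N_i \setminus \{*\}$ is a single $\sim$-class. The two partitions coincide, yielding uniqueness up to reordering.

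The argument has no real obstacle; the only point requiring attention is the correspondence between wedge decompositions and $\A$-stable partitions of $M \setminus \{*\}$, which is immediate once one unpacks the definition of $\oplus$. Note that no Fitting-lemma style input is needed, precisely because the wedge sum in $\C_\A$ is so rigid: its summands are recoverable from the underlying pointed-set structure together with the $\A$-action, via the connected-components viewpoint above.
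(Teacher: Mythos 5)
Your proof is correct and is essentially the same argument as the paper's: your equivalence classes under $\sim$ are exactly the connected components of the directed colored graph $\Omega_M$ (vertices $M \setminus \{*\}$, edges $m \to a\cdot m$ for $a\cdot m \neq *$) that the paper uses, and the identification of wedge decompositions with $\A$-stable partitions of $M\setminus\{*\}$ is the same mechanism. Your write-up of the uniqueness step is in fact more detailed than the paper's, which dismisses it as immediate.
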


\begin{proof}
Let $\Omega_M$ be the directed colored graph with vertex set $ V(\Omega_M) := M \backslash *$, and edge set  $$ E(\Omega_M) := \{ m \rightarrow a \cdot m \vert a \in A, a \cdot m \neq * \},$$ where the edge $m \rightarrow a \cdot m$ is colored by $a$. It is clear that
\begin{equation} \label{disj_union}
\Omega_{M \oplus N} = \Omega_M \coprod \Omega_N
\end{equation}
 Let $\Omega_M = \Gamma_1 \cup \Gamma_2 \cdots \cup \Gamma_k$ be the decomposition of $\Omega_M$ into connected components, and $M_{\Gamma_i}$ the subset of $M$ corresponding to $\Gamma_i$, together with the basepoint $*$. Then $M = \oplus M_{\Gamma_k}$, and it is clear  from \ref{disj_union}  that each $M_{\Gamma_i}$ is indecomposable. The uniqueness of the decomposition is immediate. 
\end{proof}

\bigskip

\begin{remark} \label{subobj}

Suppose $M = \oplus^k_{i=1} M_i$ is the decomposition of an $\A$--module into indecomposables, and $N \subset M$ is a submodule. It then immediately follows that $N = \oplus (N \cap M_i)$. 

\end{remark}

\bigskip

Let $\on{Rep}(\A) := \mathbb{Z}[ \ol{M} ]/I ,\; \ol{M} \in \on{Iso}(\C_{\A}) $, where $I$ is the sub-group generated by differences $\ol{M \oplus N} - \ol{M} - \ol{N}$. The fact that the symmetric monoidal operations $\wedge, \otimes$ (when defined) are compatible with $\oplus$ shows that they descend to $\on{Rep}(\A)$. More precisely:

\begin{definition}
Let $\A$ be a semigroup. 
\begin{enumerate}
\item Let $\on{Rep}^{\wedge}(\A)$ denote the commutative ring obtained from  $\on{Rep}(\A)$ using the product induced by the smash product on $\C_{\A}$.
\item If $\A$ is commutative, let $\on{Rep}^{\otimes}(\A)$ denote the commutative ring obtained from $\on{Rep}(\A)$ using the product induced by the tensor product on $\C_{\A}$.
\end{enumerate}
\end{definition}

\bigskip

Given a ring $R$, we obtain a base-change functor (which by abuse of of notation, we will denote by the same symbol as \ref{mb_change})
\begin{equation}
\otimes_{\fun} R : \C_{\A} \rightarrow R[\A]-mod
\end{equation}
to the category of $R[\A]$--modules defined by setting
\[
M \otimes_{\fun} R := \bigoplus_{m \in M, m \neq *} R \cdot m 
\]
i.e. the free $R$--module on the non-zero elements of $M$, with the $R[\A]$--action induced from the $\A$--action on $M$.

\subsection{Exact Sequences}

A diagram $M_1 \overset{f}{\rightarrow} M_2 \overset{g}{\rightarrow} M_3 $ in $\C_{\A}$ is said to be \emph{exact at $M_2$} if $Ker(g) = Im(f)$. A sequence of the form $$ 0 \rightarrow M_1 \rightarrow M_2 \rightarrow M_3 \rightarrow 0 $$ is a \emph{short exact sequence} if it is exact at $M_1, M_2$ and $M_3$. 

One key respect in which $\C_{\A}$ differs from an abelian category is he fact that given a morphism $f: M \rightarrow N$, the induced morphism $M/Ker(f) \rightarrow Im(f)$ need not be an isomorphism. This defect also manifests itself in the fact that the base change functor $\otimes_{\fun} R : \C_{\A} \rightarrow R[\A] -- mod$ fails to be exact. $\C_{\A}$ does however contain a (non-full) subcategory which is well-behaved in this sense, and which we proceed to describe. 

\begin{definition}
A morphism $f: M \rightarrow N$ is \emph{normal} if every fibre of $f$ contains at most one element, except for the fibre $f^{-1}(*)$ of the basepoint $* \in N$. 
\end{definition}

It is straightforward to verify that this condition is equivalent to the requirement that the canonical morphism $M/Ker(f) \rightarrow Im(f)$ be an isomorphism, and that the composition of normal morphisms is normal. 

\bigskip

\begin{definition}
Let $\C^N_{\A}$ denote the subcategory of $\C_{\A}$ with the same objects as $\C_{\A}$, but whose morphisms are restricted to the normal morphisms of $\C_{\A}$. A short exact sequence in $\C^N_{\A}$ is called \emph{admissible}.
\end{definition}

\bigskip

\begin{remark} In contrast to $\C_{\A}$, $\C^N_{\A}$ is typically neither (small) complete nor co-complete. However, $\otimes_{\fun} R$ is exact on $\C^N_{\A}$ for any ring $R$. Note that $\on{Iso}(\C_{\A}) = \on{Iso}(\C^N_{\A})$, since all isomorphisms are normal. 
\end{remark}

\bigskip

\begin{lemma} \label{finitary} Let $\A$ be a semigroup and $\C^N_{\A}$ as above.
\begin{enumerate}
\item For $M,N \in \C^N_{\A}$, $\vert Hom_{\C^N_{\A}}(M,N) \vert < \infty$ \label{part1}
\item Suppose $\A$ is finitely generated. For $M,N \in \C^N_{\A}$, there are finitely many admissible short exact sequences 
\begin{equation} \label{ses2}
0 \rightarrow M \overset{f}{\rightarrow} L \overset{g}{\rightarrow} N \rightarrow 0
\end{equation}
up to isomorphism. 
\end{enumerate}
\end{lemma}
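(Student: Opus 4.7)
For part \eqref{part1}, my plan is to notice that any morphism $f : M \to N$ in $\C^N_{\A}$ is, a fortiori, a morphism of pointed sets, and so is determined by a function $M \setminus \{*\} \to N$. Since $M$ and $N$ are finite, there are at most $|N|^{|M|-1}$ such functions. Hence $\on{Hom}_{\C^N_{\A}}(M,N)$, being a subset, is finite. This step is routine.

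For part (2), the strategy is to parametrize admissible short exact sequences \eqref{ses2} by extra data on a fixed pointed set and observe that finite generation of $\A$ makes this data finite. First, since $f$ is normal and monic (i.e.\ $\on{Ker}(f) = *$), its fibers over non-basepoints are singletons, so $f$ realizes $M$ as a pointed subset of $L$; similarly, since $g$ is normal and surjective with $g^{-1}(*) = f(M)$, the map $g$ restricts to a bijection $L \setminus f(M) \to N \setminus \{*\}$. Consequently $|L| = |M| + |N| - 1$, and up to isomorphism of short exact sequences we may fix the underlying pointed set of $L$ to be $M \vee N$, with $f$ the inclusion of $M$ and $g$ the canonical projection to $N$.

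It then remains to count the number of $\A$-module structures on the pointed set $M \vee N$ that (a) restrict to the given structure on $M$ and (b) descend to the given structure on $N$. For $m \in M$ the value $a \cdot m$ is prescribed. For $x \in N \setminus \{*\}$, viewed as an element of $L \setminus f(M)$, and $a \in \A$, condition (b) forces $a \cdot x$ to be the unique element of $L \setminus f(M)$ lifting $a \cdot x \in N$ whenever $a \cdot x \neq *$ in $N$, and to be some (unspecified) element of $M$ whenever $a \cdot x = *$ in $N$. Fixing a finite generating set $\{a_1, \dots, a_n\}$ of $\A$, the entire $\A$-action is determined by its values on the generators; the only free data is therefore a choice, for each pair $(a_i, x)$ with $a_i \cdot x = *$ in $N$, of an element of $M$. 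This gives at most $|M|^{n(|N|-1)}$ possibilities. Not every assignment will respect the relations among the $a_i$ in $\A$, but this only cuts the count further. Hence there are finitely many admissible SES's with middle term supported on $M \vee N$, and passing to isomorphism classes can only decrease this count.

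The only point requiring real care is the reduction to a fixed underlying pointed set $M \vee N$: one must check that replacing $L$ by $M \vee N$ via the bijection induced by $f$ and a section of $g$ produces an isomorphism of short exact sequences, so that no equivalence class is lost in the parametrization. Once this reduction is in hand, both parts of the lemma follow from elementary counting combined with finite generation of $\A$.
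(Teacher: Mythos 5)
Your proposal is correct, and it rests on the same key finiteness input as the paper's proof: since $\A$ is finitely generated, an $\A$-action on a finite pointed set is determined by the action of the generators, of which there are only finitely many choices. The only organizational difference is that the paper first bounds the number of isomorphism classes of possible middle terms $L$ (any $K$ with $\dim(K)=\dim(M)+\dim(N)$) and then invokes part (1) to bound the morphisms, whereas you normalize the underlying pointed set of $L$ to $M\vee N$ and count compatible module structures directly — a slightly more explicit parametrization that yields the same conclusion.
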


\begin{proof}
\begin{enumerate}
\item This is obvious, since $M,N$ are finite sets. 
\item Let $n \in \mathbb{N}$. We begin by showing that up to isomorphism, there are finitely many $K \in \C^N_{\A}$ such that $dim(K) =n$. The action of $\A$ on $K$ can be specified by giving the action of each of the generators, which corresponds to an element of $\on{Hom}_{pSet}(K,K)$. The latter is is a finite set, and so the claim follows. Since $dim(L) = dim(M) + dim(N)$, it follows that $L$ can belong to only finitely many isomorphism classes in $\C^N_{\A}$. The result now follows from (\ref{part1}).
\end{enumerate}
\end{proof}

\bigskip

\begin{remark}
The requirement that $\A$ be finitely generated in part $(2)$ of Lemma \ref{finitary} is necessary. Let $\langle x_{\mathbb{N}} \rangle := \langle x_1, x_2, \cdots \rangle$ denote the free semigroup on countably many generators $x_1, x_2, \cdots$, and let $V$ denote the $\langle x_{\mathbb{N}} \rangle $--module whose underlying pointed set is $\{ e, * \}$ (i.e. it contains one non-zero element, so that $\vert V \vert = 1$), and $x_i \cdot e = *, \forall i \in \mathbb{N}$. If $$ 0 \rightarrow V \rightarrow W \rightarrow V \rightarrow 0$$ is an admissible short exact sequence, then we can identify $W$ with the pointed set $\{ e, f, * \}$, where $f$ maps to the nonzero element in the cokernel. We have $x_i \cdot e = *$, but we may freely choose $x_i \cdot f = e$ or $x_i \cdot f = *$. There are therefore infinitely many mutually non-isomorphic choices for $W$. 
\end{remark}

\subsubsection{The Grothendieck group}

We may use the category $\C^N_{\A}$ to attach to $\A$ an invariant $K_0 (\A)$ - the Grothendieck group of $\C^N_{\A}$. Let $$ K_0 (\A) := \mathbb{Z}[\overline{M}] / J , \; \; \;  \ol{M} \in \on{Iso}(\C^N_{\A}), $$ where $J$ is the subgroup generated by $\ol{L} - \ol{M} - \ol{N}$ for all admissible short exact sequences (\ref{ses2}).

\section{The Hall algebra of $\C^N_{\A}$} \label{Hall_Alg}

Let $\A$ be a finitely generated semigroup. In this section, we define the Hall algebra of the category $\C^N_{\A}$. In order to off-load notation, we will denote it by $\H_{\A}$ rather than the more cumbersome $\H_{\C^N_{\A}}$ used in the introduction. For more on Hall algebras see \cite{S}.

\medskip
 
As a vector space:
%\marginpar{nolabel}
\begin{equation*} %\label{Hall_def2}
\H_{\A} := \{ f: \on{Iso}(\C^N_{\A}) \rightarrow \mathbb{Q} \; \vert \; \# \on{supp} (f) < \infty \}.
\end{equation*}
We equip $\H_{\A}$ with the convolution product
\begin{equation*} \label{Hall_prod}
f \star g (\ol{M}) = \sum_{N \subset M} f(\ol{M/N}) g(\ol{N}),
\end{equation*}
where the sum is over all $\A$ sub-modules $N$ of $M$ (in what follows, it is conceptually helpful to fix a representative of each isomorphism class). 
Note that Lemma \ref{finitary} and the finiteness of the support of $f,g$ ensures that the sum in 
(\ref{Hall_prod}) is finite, and that $f \star g$ is again finitely supported.

\begin{lemma}
The convolution product $*$ is associative. 
%\marginpar{adjective}
\end{lemma}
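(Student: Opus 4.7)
The plan is the standard Hall algebra calculation, adapted to the proto-exact setting of $\C^N_\A$: expand both triple products and reindex using the inclusion-preserving correspondence between flags in $M$ and submodules of quotients (property \ref{property9} of $\C_\A$).

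Fix $\ol{M} \in \on{Iso}(\C^N_\A)$ and representatives for each isomorphism class. First I would expand
\[
((f \star g) \star h)(\ol{M}) = \sum_{N \subset M} (f \star g)(\ol{M/N}) \, h(\ol{N}) = \sum_{N \subset M} \sum_{K \subset M/N} f(\ol{(M/N)/K}) \, g(\ol{K}) \, h(\ol{N}).
\]
By property \ref{property9}, the submodules $K \subset M/N$ are in inclusion-preserving bijection with submodules $L$ of $M$ containing $N$, via $L \mapsto L/N$, and under this bijection $(M/N)/(L/N) \simeq M/L$. Thus the double sum above becomes
\[
((f \star g) \star h)(\ol{M}) = \sum_{N \subset L \subset M} f(\ol{M/L}) \, g(\ol{L/N}) \, h(\ol{N}).
\]

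Next I would expand the other associativity bracket:
\[
(f \star (g \star h))(\ol{M}) = \sum_{L \subset M} f(\ol{M/L}) (g \star h)(\ol{L}) = \sum_{L \subset M} \sum_{N \subset L} f(\ol{M/L}) \, g(\ol{L/N}) \, h(\ol{N}),
\]
which is visibly the same sum over pairs $N \subset L \subset M$. Comparing, both triple products equal $\sum_{N \subset L \subset M} f(\ol{M/L}) g(\ol{L/N}) h(\ol{N})$, so $(f \star g) \star h = f \star (g \star h)$.

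The only genuine point to check is that all the sums involved are finite, so that the Fubini-style reindexing is legitimate. This is where Lemma \ref{finitary} is used: for fixed $\ol{M}$, the set of submodules of $M$ is finite (since $M$ is finite as a pointed set), and the finite support of $f,g,h$ together with part (2) of Lemma \ref{finitary} guarantees that only finitely many terms in each sum contribute. I don't expect any real obstacle — the argument is formally identical to the classical Ringel–Hall case, and all the abelian-category features used (the quotient-subobject correspondence and finiteness of sub-module lattices) are available in $\C^N_\A$ by the results already established.
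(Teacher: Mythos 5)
Your proof is correct and follows essentially the same route as the paper's: expand both triple products and use the inclusion-preserving correspondence between submodules of $M/N$ and submodules of $M$ containing $N$ (property \ref{property9}) to identify both with a single sum over flags $N \subset L \subset M$. The extra remark on finiteness of the sums is consistent with how the paper handles it (immediately after the definition of $\star$).
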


\begin{proof}
Suppose $f,g,h \in \H_{\A}$. Then
\begin{align*}
(f \star (g \star h)) (\ol{M}) &= \sum_{N \subset M} f(\ol{M/N}) (g \star h)(\ol{N}) \\
                                   &= \sum_{N \subset M} f(\ol{M/ N}) (\sum_{L \subset N} g(\ol{N/L}) h(\ol{L}))  \\
                                   &= \sum_{L \subset N \subset M} f(\ol{M/ N}) g(\ol{N/L}) h(\ol{L})
\end{align*}
whereas
\begin{align*}
((f \star g) \star h ) (M) &= \sum_{L \subset M} (f \star g) (\ol{M / L}) h(\ol{L}) \\
                                    &= \sum_{L \subset M} (\sum_{K \subset M / L} f(\ol{(M / L) / K}) g(\ol{K}) ) h(\ol{L}) \\
                                    &= \sum_{L \subset N \subset M} f(\ol{M/N}) g(\ol{N/L}) h(\ol{L}),
\end{align*}
where in the last step we have used the fact (see property \ref{property9} of section  \ref{Amodules} ) that there is an inclusion-preserving bijection between sub-modules $K \subset M / L$ and sub-modules $N \subset M$ containing $L$, under which $N / L \simeq K$.  This bijection is compatible with taking quotients in the sense that $(M/L)/K \simeq M/N$. 
\end{proof}

\bigskip

$\H_{\A}$ is spanned by $\delta$-functions $\delta_{\ol{M}} \in \H_A$ supported on individual isomorphism classes, and so it is useful to make explicit the multiplication of two such elements. We have
\begin{equation} \label{deltamult}
\delta_{\ol{M}} \star \delta_{\ol{N}} = \sum_{ \ol{R} \in \on{Iso}(\C^N_A) } \mathbb{P}^R_{M,N} \delta_{\ol{R}} 
\end{equation}
where 
\[
\mathbb{P}^R_{M,N}  := \# \vert \{ L \subset R, L \simeq N, R/L \simeq M \} \vert
\]
As explained in the introduction, 
\[
\mathbb{P}^R_{M,N} \vert \on{Aut}(M) \vert \vert \on{Aut}(N) \vert
\]
counts the isomorphism classes of admissible short exact sequences of the form
\begin{equation} \label{ses}
0 \rightarrow N \rightarrow R \rightarrow M \rightarrow 0,
\end{equation}
where $ \on{Aut}(M) $ is the automorphism group of $M$. 

\bigskip

We may also equip $\H_{\A}$ with a coproduct
\begin{equation*}
\Delta: \H_{\A} \rightarrow \H_{\A} \otimes \H_{\A}
\end{equation*}
given by
%\marginpar{nolabels\\ added words}
$ %\begin{equation*}  \label{Hall_coprod}
\Delta(f)(\ol{M},\ol{N}) := f(\ol{M \oplus N}).
$ %\end{equation*}
The coproduct $\Delta$ is clearly co-commutative.

\begin{lemma}
The following holds in $\H_{\A}$:
\begin{enumerate}
\item $\Delta$ is co-associative: $(\Delta \otimes \on{Id}) \circ \Delta = (\on{Id} \otimes \Delta) \circ \Delta$
\item $\Delta$ is compatible with $\star$:  $\Delta(f \star g) = \Delta(f) \star \Delta(g).$
\end{enumerate}
\end{lemma}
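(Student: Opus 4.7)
Part (1) is formal. Identifying $\H_\A \otimes \H_\A$ with finitely supported functions on ordered pairs of iso classes, so that $\Delta(f)(\ol{M},\ol{N}) = f(\ol{M \oplus N})$, one computes that both $((\Delta \otimes \on{Id})\circ\Delta)(f)$ and $((\on{Id} \otimes \Delta)\circ\Delta)(f)$, evaluated on a triple $(\ol{M},\ol{N},\ol{L})$, return $f$ applied to the iso class of the triple direct sum (bracketed as $(M \oplus N) \oplus L$ in one case and $M \oplus (N \oplus L)$ in the other). These agree by associativity of the coproduct $\oplus$ in $\C^N_\A$.

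For (2), I would first make explicit the induced convolution on $\H_\A \otimes \H_\A$: for finitely supported $F, G$ on pairs of iso classes,
\[
(F \star G)(\ol{M},\ol{N}) \;=\; \sum_{K \subset M,\ L \subset N} F(\ol{M/K}, \ol{N/L})\, G(\ol{K}, \ol{L}).
\]
Expanding both sides of $\Delta(f \star g) = \Delta(f) \star \Delta(g)$ at $(\ol{M},\ol{N})$ reduces the identity to
\[
\sum_{S \subset M \oplus N} f(\ol{(M \oplus N)/S})\, g(\ol{S}) \;=\; \sum_{K \subset M,\ L \subset N} f(\ol{M/K \oplus N/L})\, g(\ol{K \oplus L}),
\]
so the task is to match these two sums term-by-term via a natural bijection of the indexing sets.

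The key ingredient is Remark \ref{subobj}, which via Krull--Schmidt yields a bijection $S \mapsto (S \cap M,\, S \cap N)$, with inverse $(K,L) \mapsto K \oplus L$, between submodules $S \subset M \oplus N$ and pairs of submodules of $M$ and $N$. Given this, matching the summands reduces to the canonical isomorphism $(M \oplus N)/(K \oplus L) \simeq M/K \oplus N/L$, which is immediate from the set-theoretic description of quotients as collapsing a submodule to the basepoint (the resulting pointed set with $\A$-action is visibly the wedge of $M/K$ and $N/L$). The main step requiring justification — and in that sense the one mild obstacle — is precisely the submodule decomposition from Remark \ref{subobj}, without which a submodule $S$ of $M \oplus N$ need not split as $(S \cap M) \oplus (S \cap N)$ and the bijection would break down.
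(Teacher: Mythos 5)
Your argument is correct, but it is worth noting that the paper does not actually prove this lemma: its ``proof'' is a one-line deferral to the corresponding statements in \cite{Sz2}. Your write-up therefore supplies the argument the paper omits, and it is the standard one. Part (1) is indeed formal once $\H_{\A}\otimes\H_{\A}$ is identified with finitely supported functions on pairs. For part (2), the two ingredients you isolate are exactly the right ones: the bijection $S \mapsto (S\cap M, S\cap N)$ between submodules of $M\oplus N$ and pairs of submodules, and the isomorphism $(M\oplus N)/(K\oplus L)\simeq M/K\oplus N/L$. One small remark: you attribute the splitting of $S$ to Remark \ref{subobj} ``via Krull--Schmidt,'' but that remark is stated only for the decomposition into indecomposables; the fact you actually need holds for an arbitrary wedge decomposition and is more elementary than Krull--Schmidt --- since $M\oplus N$ is a wedge, every non-basepoint element of $S$ lies in exactly one of $M$ or $N$, and $S\cap M$, $S\cap N$ are automatically submodules, so $S=(S\cap M)\oplus(S\cap N)$ directly. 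With that observation your proof is complete and self-contained, which is arguably preferable to the paper's citation.
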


\begin{proof}
The proof of both parts is the same as the proof of the corresponding statements for the Hall algebra of the category of quiver representations over $\fun$, given in \cite{Sz2}. 
\end{proof}

$\H_{\A}$ carries a natural grading by $\mathbb{Z}_{\geq 0}$ corresponding to the dimension of $M \in \C^N_{\A}$.  
%$K^+_0 (\Pone )$ - the effective cone inside $K_0 (\Pone)$, (which by Theorem \ref{Knot} is isomorphic to $\mathbb{N} \times \mathbb{N} \times \mathbb{N} $) as follows.
%$\H$ is spanned by $\delta$-functions $\delta_{M}$ supported on individual isomorphism classes, and we define
%\[
%\deg(\delta_{M}) = [M] \in K_0 (\Pone)
%\]
%where $[M]$ denotes the class of $M$ in the Grothendieck group. 
With this grading, $\H_{\A}$ becomes a graded, connected, co-commutative bialgebra, and thus automatically a Hopf algebra. By the Milnor-Moore Theorem, $\H_{\A}$ is isomorphic to  $\mathbb{U}(\n_{\A})$ - the universal enveloping algebra of $\n_{\A}$, where the latter is the Lie algebra of its primitive elements.  and the definition of the co-product implies that $\n_{\A}$ is spanned  by $\delta_{\ol{M}}$ for isomorphism classes $\ol{M}$ of indecomposable $\A$--modules, with bracket
\[
[\delta_{\ol{M}}, \delta_{\ol{N}}] = \delta_{\ol{M}} \star \delta_{\ol{N}} - \delta_{\ol{N}} \star \delta_{\ol{M}}.
\]
We have thus proved:

\begin{theorem}
Let $\A$ be a finitely generated semigroup. There is an isomorphism of Hopf algebras $\H_{\A} \simeq \mathbb{U}(\n_{\A})$, where $\H_{\A}$ is the Hall algebra of the category $\C^N_{\A}$ of finite $\A$--modules with normal morphisms, and $\n_{\A}$ is the Lie sub-algebra spanned by $\delta_{\ol{M}}$ for $M$ indecomposable.
\end{theorem}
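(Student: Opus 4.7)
The plan is to assemble the three ingredients needed to invoke the Milnor--Moore theorem and then read off the primitive elements explicitly. First I would check that $\H_{\A}$ is a graded connected co-commutative bialgebra over $\mathbb{Q}$. Associativity of $\star$, coassociativity of $\Delta$, and compatibility of $\Delta$ with $\star$ have already been proved in the lemmas above. Co-commutativity is immediate from the definition, since $f(\ol{M\oplus N})=f(\ol{N\oplus M})$. For the grading by $\dim$, the key observation is that in any admissible short exact sequence $0\to N\to R\to M\to 0$ one has $|R|=|M|+|N|-1$ (the basepoint of $N$ becomes the basepoint of $R$), so $\dim R=\dim M+\dim N$; combined with $\dim(M\oplus N)=\dim M+\dim N$, both $\star$ and $\Delta$ respect the grading. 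Connectedness is obvious: the only object of dimension zero is the trivial module $\{*\}$, whose $\delta$-function is the unit.

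Next, because any graded connected bialgebra over a field admits a unique antipode constructed inductively on the degree, $\H_{\A}$ is automatically a graded connected co-commutative Hopf algebra over $\mathbb{Q}$. The Milnor--Moore theorem (valid in characteristic zero) therefore yields $\H_{\A}\simeq \mathbb{U}(\n_{\A})$, where $\n_{\A}$ is the Lie algebra of primitive elements, equipped with the commutator bracket inherited from $\star$.

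It remains to identify the primitives. Since $\{\delta_{\ol M}\}_{\ol M\in\on{Iso}(\C^N_{\A})}$ is a basis, I would compute
\[
\Delta(\delta_{\ol R})=\sum_{\ol M,\ol N:\,M\oplus N\simeq R}\delta_{\ol M}\otimes\delta_{\ol N}.
\]
If $R$ is indecomposable, the Krull--Schmidt proposition proved earlier forces $M\simeq 0$ or $N\simeq 0$ in any such decomposition, so $\Delta(\delta_{\ol R})=\delta_{\ol R}\otimes 1+1\otimes\delta_{\ol R}$, i.e.\ $\delta_{\ol R}$ is primitive. Conversely, if $R\simeq R_1\oplus R_2$ with both summands nonzero, then $\Delta(\delta_{\ol R})$ has a nontrivial cross term evaluated at $(\ol{R_1},\ol{R_2})$, so $\delta_{\ol R}$ is not primitive. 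A straightforward linear algebra argument using Krull--Schmidt upgrades this to the statement that $\n_{\A}$ is precisely the span of $\{\delta_{\ol M}:M\text{ indecomposable}\}$.

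The main obstacle is essentially nonexistent: all the substantive work is already carried out (Krull--Schmidt, finiteness lemma, coassociativity, compatibility) or imported from Milnor--Moore. The one point requiring a moment of care is the identification of primitives, where one must make sure the Krull--Schmidt uniqueness is applied correctly in both directions; everything else is a bookkeeping verification that the standard machinery applies in this non-additive setting.
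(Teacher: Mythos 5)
Your proposal is correct and follows essentially the same route as the paper: establish that $\H_{\A}$ is a graded connected co-commutative bialgebra (hence a Hopf algebra), invoke Milnor--Moore, and identify the primitives with the span of $\delta_{\ol{M}}$ for $M$ indecomposable via the Krull--Schmidt decomposition. You simply spell out the details (grading compatibility, the explicit coproduct of $\delta_{\ol{R}}$, and both directions of the primitivity argument) that the paper leaves implicit.
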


\section{Examples} \label{examples}
\subsection{The monoid $\fm$ and linear algebra over $\fun$} \label{fm}

Let $\fm$ denote the free monoid on one generator, i.e. $\fm := \{ 0, 1, t, t^2, t^3 \cdots \}$. Given a $\fm$--module $M$, the action of the generator $t$ yields a map of pointed sets ($\fun$ vector spaces) $t: M \rightarrow M$, and conversely giving such a map equips $M$ with a $\fm$--module structure. For a field $k$, linear algebra over $k$ is the study of modules over the polynomial ring $k[t]$, which is the base-change $\fm \otimes_{\fun} k$. Thus we may view the study of $\fm$--modules as linear algebra over $\fun$. 

Given $M \in \C^N_{\fm}$, we may attach to it an oriented graph $\Gamma_M$, with $V(\Gamma_M) := M \backslash \{ * \}$ (i.e. the non-zero elements of $M$), and the oriented edges $$E(\Gamma_M) :=  \{ m \rightarrow t \cdot m \vert m \in M, m \neq * \}.$$ Every vertex in  $\Gamma_M$ therefore has at most one out-going edge. We have $\Gamma_{M \oplus N} = \Gamma_M \coprod \Gamma_N$, and it follows from this that the connected components of $\Gamma_M$ correspond to its indecomposable factors. We proceed to characterize the possible graphs $\Gamma_M$. Suppose that $M$ is indecomposable. There are two possibilities for the action of $t$ on $M$:

\begin{enumerate}
\item $\exists n \in \mathbb{N}$ such that $t^n \cdot m = * \; \forall m \in M$ - in this case we say that $M$ and $t$ are \emph{nilpotent}
\item $\exists m \in M$ such that $t^n \cdot m \neq * \; \forall n \in \mathbb{N}$. Since $\{ t^k\cdot m \}_{k \in \mathbb{N}} \subset M$ is finite, this implies that there are $n_1, n_2 \in \mathbb{N}$ such that $t^{n_1} \cdot m = t^{n_2} \cdot m$.
\end{enumerate}

Thus, in the first case, $\Gamma_M$ is a tree, and in the second, it contains an oriented cycle. 

\begin{example}  \label{graphs}
Examples of $\Gamma_M$:
\begin{center}          
                 
\begin{pspicture}(7,3)

\psdots(1.5, 1)
\psdots(2.5, 1)
\psdots(2.5, 2)
\psdots(1.5, 2)
\psdots(1,0)
\psdots(0.5,1)
\psdots(3,3)

\psdots(4,0)
\psdots(6,0)
\psdots(5,1)
\psdots(7,1)
\psdots(6,2)

\psline{->}(4,0)(4.97,0.97)
\psline{->}(6,0)(5.03,0.97)
\psline{->}(5,1)(5.97,1.97)
\psline{->}(7,1)(6.03,1.97)

\psline{->}(3,3)(2.57,2.03)
\psline{->}(1.5, 1)(2.45, 1)
\psline{->}(2.5, 1)(2.5, 1.95)
\psline{->}(2.5, 2)(1.55, 2)
\psline{->}(1.5, 2)(1.5, 1.05)
\psline{->}(1,0)(1.5, 0.97)
\psline{->}(0.5,1)(1.45,1)
\end{pspicture}

\end{center}

\end{example}

If $\Gamma$ is a (not necessarily oriented) graph, we may describe a path $\sigma$ in $\Gamma$ by the ordered tuple $[v_1, v_2, \cdots, v_k]$ of consecutive vertices $v_i \in \Gamma$ encountered along the way (i.e. in this notation, $v_1$ is the starting vertex and $v_k$ the final one). The opposite path $[v_k, \cdots, v_1]$ will be denoted $\overline{\sigma}$.  A cycle $[v_1, v_2, \cdots, v_k, v_1] \; k \geq 3$ will be called \emph{minimal} if $v_i \neq v_j$ for $1 \leq i \neq j \leq k$.  Given a vertex $v \in \Gamma_M$ with $a$ incoming and $b$ outgoing edges, we will call the ordered pair $(a,b)$ its \emph{type}. Since each vertex has at most one outgoing edge, the possible types are $(a,0)$ and $(a, 1)$, $a \in \mathbb{Z}_{\geq 0}$. Note that a vertex of type $(a,0)$ corresponds to an element $m \in M$ such that $t \cdot m = *$. We will denote by $\gamma_M$ the un-oriented graph underlying $\Gamma_M$, and by $h_1 (\gamma_M) = h_1 (\Gamma_M)$ their first betti number. A path in $\gamma_M$ is said to be \emph{correctly oriented} if it arises from an oriented path in $\Gamma_M$.

\begin{lemma} \label{single_root}
If $\Gamma_M$ is connected (hence $M$ indecomposable), it contains at most one vertex of type $(a,0)$.
\end{lemma}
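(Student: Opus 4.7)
The plan is to argue by contradiction. Suppose $\Gamma_M$ contains two distinct vertices $v \neq v'$, each of type $(a,0)$ for some (possibly different) values of $a$. Because $\Gamma_M$ is connected, so is the underlying undirected graph $\gamma_M$, and I may choose a simple undirected path $\sigma = [u_0, u_1, \ldots, u_n]$ in $\gamma_M$ with $u_0 = v$, $u_n = v'$, and $n \geq 1$. The strategy is to extract a contradiction purely from the orientation data along $\sigma$, exploiting the rigidity that each vertex of $\Gamma_M$ carries at most one outgoing edge (which just records the fact that $m \mapsto t\cdot m$ is a function of $m$).

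Assign to each consecutive pair $(u_{i-1}, u_i)$ a label: call the edge \emph{forward} if its orientation in $\Gamma_M$ is $u_{i-1} \to u_i$, and \emph{backward} if it is $u_i \to u_{i-1}$. The central observation is a local rigidity: a backward edge cannot be immediately followed by a forward edge along $\sigma$. Indeed, if the edge $e_i$ between $u_{i-1}$ and $u_i$ is backward and $e_{i+1}$ between $u_i$ and $u_{i+1}$ is forward, then the vertex $u_i$ would have two distinct outgoing edges in $\Gamma_M$, namely $u_i \to u_{i-1}$ and $u_i \to u_{i+1}$, contradicting the at-most-one-outgoing-edge property. (Here the fact that $\sigma$ is a simple path is used to guarantee $u_{i-1}\neq u_{i+1}$, so that these really are two distinct edges.)

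The endpoints now force a contradiction. Since $v = u_0$ has type $(a,0)$ it has no outgoing edges, so $e_1$ must be backward; symmetrically $e_n$ must be forward. Reading the word $e_1 e_2 \cdots e_n$ in the alphabet \{backward, forward\}, it begins with ``backward'' and ends with ``forward'', so somewhere along the way a backward letter must be immediately followed by a forward letter. This is exactly the forbidden local configuration, and the contradiction completes the proof. I do not anticipate any real obstacle; the only point requiring mild care is passing to a simple path (easy, using connectedness of $\gamma_M$) so that the two outgoing edges produced in the key step are genuinely distinct.
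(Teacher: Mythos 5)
Your proof is correct and follows essentially the same route as the paper's: take a simple path in $\gamma_M$ between the two putative type-$(a,0)$ vertices, observe that the first edge must point back toward one endpoint and the last edge forward toward the other, and conclude that some intermediate vertex would need two outgoing edges. Your forward/backward word argument is just a more explicit rendering of the paper's one-line deduction.
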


\begin{proof}
Suppose that $u, w$ are two vertices of type $(a,0)$, and $\sigma = [u, v_1, \cdots, v_k, w]$ is a path in $\gamma_M$ from $u$ to $w$, which we may assume to contain no cycles (i.e. each vertex occurs once). Note that neither $\sigma$ nor $\overline{\sigma}$ arise from oriented paths in $\Gamma_M$, since their initial edges are traversed in a direction opposite to their orientation in $\Gamma_M$. It follows that at least one of $v_1, \cdots, v_k$ must have at least two out-going edges in $\Gamma_M$, yielding a contradiction. 
\end{proof}

\bigskip

\begin{lemma}
If $\sigma = [v_1, v_2, \cdots, v_k, v_1]$ is a minimal cycle in $\gamma_M$, then either $\sigma$ or $\overline{\sigma}$ is correctly oriented. 
\end{lemma}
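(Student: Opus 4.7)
The plan is to exploit the fact that each vertex of $\Gamma_M$ has at most one outgoing edge, since $m \mapsto t \cdot m$ is a single-valued function on $M \setminus \{*\}$. I will assign to each cycle-edge $e_i = \{v_i, v_{i+1}\}$ (indices taken mod $k$, with $v_{k+1} = v_1$) a sign $\epsilon_i \in \{+1, -1\}$, setting $\epsilon_i = +1$ if $e_i$ is oriented $v_i \to v_{i+1}$ in $\Gamma_M$ and $\epsilon_i = -1$ otherwise. Then $\sigma$ is correctly oriented precisely when all $\epsilon_i = +1$, while $\overline{\sigma}$ is correctly oriented precisely when all $\epsilon_i = -1$, so the task reduces to showing one of these two alternatives must hold.

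First I would perform a local analysis at each cycle vertex $v_{i+1}$. Minimality of $\sigma$ ensures that $v_{i+1}$ appears only once along the cycle, so the only cycle-edges incident to it are $e_i$ and $e_{i+1}$. A direct case check of the four possible sign pairs shows that both edges point away from $v_{i+1}$ in $\Gamma_M$ precisely when $\epsilon_i = -1$ and $\epsilon_{i+1} = +1$; such a configuration would endow $v_{i+1}$ with at least two outgoing edges in $\Gamma_M$, which is impossible. The three remaining patterns $(+1,+1)$, $(+1,-1)$, $(-1,-1)$ are all admissible. In the language of the cycle, $v_{i+1}$ may be a ``sink'' (both cycle-edges pointing in) or a ``transit'' vertex (one in, one out), but never a ``source''.

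Next I would close the argument globally on the cyclic sequence $(\epsilon_1, \ldots, \epsilon_k)$. The only forbidden transition is $-1 \to +1$, so once the sequence drops from $+1$ to $-1$ at some sink it can never climb back to $+1$; yet cyclic closure demands a return from $\epsilon_k$ to $\epsilon_1$. Hence either all $\epsilon_i$ equal $+1$ or all equal $-1$, and the claim follows. The principal (and essentially only) obstacle is identifying the ``no source'' constraint encoded in the single-outgoing-edge property of $\Gamma_M$; once this is in hand, the cyclic combinatorial finish is immediate, and no appeal to the deeper category-theoretic structure of $\C^N_{\fm}$ is needed.
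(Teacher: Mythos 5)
Your proof is correct and follows essentially the same route as the paper's: both arguments use the fact that each vertex of $\Gamma_M$ has at most one outgoing edge to forbid a ``source'' along the cycle, and then a cyclic alternation argument to conclude that a non-uniformly oriented minimal cycle must contain one. Your sign-sequence formulation simply makes explicit the step the paper leaves as ``it is easy to see.''
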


\begin{proof}
Suppose not. Then one of the vertices $v_i$ must have at least two incoming or out-going edges. It is easy to see a vertex with at least two incoming edges forces the existence of another one with at least two out-going edges, yielding a contradiction. 
\end{proof}

\bigskip

\begin{lemma}
If $h_1(\Gamma_M) > 0$, then $\Gamma_M$ contains exactly one oriented cycle (i.e. $h_1 (\Gamma_M) = 1$). 
\end{lemma}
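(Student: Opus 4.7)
The plan is to exploit the fact that $t$ acts as a (partial) function on $M \setminus \{*\}$: each non-basepoint element $m$ satisfies either $t \cdot m = *$ (giving no outgoing edge in $\Gamma_M$) or $t \cdot m \neq *$ (giving exactly one outgoing edge). Consequently every vertex of $\Gamma_M$ has out-degree at most one, which produces the global bound
\[
|E(\Gamma_M)| \;\leq\; |V(\Gamma_M)|.
\]

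I would work under the standing assumption that $\Gamma_M$ is connected (i.e., $M$ is indecomposable, which is the context of the preceding lemmas; the general statement then follows component by component). For a finite connected graph, the Euler relation reads
\[
h_1(\gamma_M) \;=\; |E(\gamma_M)| - |V(\gamma_M)| + 1,
\]
and since $\gamma_M$ has the same vertices and edges as $\Gamma_M$, the inequality above forces $h_1(\gamma_M) \leq 1$. Combined with the hypothesis $h_1(\Gamma_M) > 0$, this pins $h_1(\Gamma_M) = 1$ immediately.

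To upgrade this to the claim about \emph{oriented} cycles, I would note that $h_1(\gamma_M) = 1$ in a connected graph means $\gamma_M$ contains a unique minimal cycle $\sigma$ up to reversal (it is a tree plus one extra edge). The previous lemma then tells us that either $\sigma$ or $\overline{\sigma}$ is correctly oriented in $\Gamma_M$, producing at least one oriented cycle. Any additional oriented cycle in $\Gamma_M$ would project to an unoriented cycle in $\gamma_M$ distinct from $\sigma$, contradicting $h_1(\gamma_M)=1$; hence the oriented cycle is unique.

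The only non-routine point is the reduction to the connected case, and this is essentially immediate: the connected components of $\Gamma_M$ correspond exactly to the indecomposable summands of $M$, and $h_1$ is additive over those components, so the argument on each component transports directly to the statement as written.
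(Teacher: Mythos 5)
Your proof is correct, but it reaches the conclusion by a genuinely different route than the paper. You derive $h_1(\gamma_M)\leq 1$ directly from the Euler relation $h_1 = |E|-|V|+1$ for a connected graph together with the out-degree bound $|E(\Gamma_M)|\leq |V(\Gamma_M)|$, and only then invoke the preceding lemma (one orientation of the unique minimal cycle is correct) to convert this into the statement about oriented cycles. The paper instead argues by contradiction at the level of oriented cycles: assuming $h_1>1$ it produces two distinct oriented cycles via the preceding lemma, observes they must be vertex-disjoint (since out-degree $\leq 1$ makes the forward orbit of any vertex unique), and then connects them by a cycle-free path in $\gamma_M$ whose endpoints force a vertex with two outgoing edges. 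Both arguments ultimately rest on the same fact --- every vertex has at most one outgoing edge --- but your counting version is shorter, sidesteps the somewhat delicate choice of connecting path, and gives the stronger clean statement $h_1(\gamma_M)\leq 1$ for \emph{every} connected $\Gamma_M$ (which also covers the tree case in Proposition~\ref{gammaclass} for free). One small caveat: your closing remark that the general statement ``follows component by component'' overstates things --- a decomposable $M$ with two non-nilpotent summands has $h_1(\Gamma_M)=2$ and two oriented cycles --- but this is harmless, since the lemma is clearly intended (as you correctly read it) under the standing assumption that $M$ is indecomposable, i.e.\ $\Gamma_M$ connected.
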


\begin{proof}
Suppose $h_1 (\Gamma_M) >1$. Then by the previous lemma we may find two distinct oriented cycles $\sigma_1, \sigma_2$. These cannot share a vertex, since an oriented cycle is determined by any of its vertices. We may thus find pair of vertices $u \in \sigma_1, w \in \sigma_2$, and a path $\tau=[u, v_1, \cdots v_k, w]$ in $\gamma_M$ connecting $u$ to $w$. Moreover, $\tau$ may be chosen free of cycles, and such that $v_1, \cdots, v_k$ are disjoint from $\sigma_1, \sigma_2$. It is clear that $\tau$ is not correctly oriented since $[u, v_1]$ is not but $[v_k,w]$ is. This forces the existence of a vertex in $\tau$ with two out-going edges, yielding a contradiction.  
\end{proof}

\bigskip

Suppose now that $h_1 (\Gamma) = 1$, and let $\sigma$ be the unique oriented cycle. Denote by $\Gamma_M / \sigma$ (resp. $\gamma_M / \sigma$) the directed (resp. un-directed ) graph obtained by collapsing $\sigma$ to a point, and by $r$ the vertex of $\Gamma_M / \sigma$ and $\gamma_M / \sigma$ corresponding to the collapsed $\sigma$. It follows from the previous lemma that $h_1 (\Gamma_M / \sigma) = 0$, so that $\Gamma_M / \sigma$ is a tree. This shows that in $\gamma_M$, there is a unique shortest path $\tau_{v \sigma} = [v, v_1, \cdots, v_k, w]$ from any vertex $v \in \Gamma_M \backslash \sigma$ to the cycle $\sigma$, having the property that $w \in \sigma$, but $v_1, \cdots, v_k \in \Gamma_M \backslash \sigma$. I claim that $\tau_{v \sigma}$ is correctly oriented. This follows since $[v_k w]$ is correctly oriented, and so if $\tau_{v \sigma}$ is not, one of $v_1, \cdots, v_k$ would have to have at least two outgoing edges. Thus, $\Gamma_M / \sigma$ is canonically a rooted tree, with root $r$, and this shows that $\Gamma_M$ is obtained by attaching rooted trees $T_1, \cdots, T_l$ to vertices of the oriented cycle $\sigma$ at the roots of $T_i$. In particular, $\Gamma_M$ contains no vertices of type $(1,0)$.

To summarize:

\begin{proposition} \label{gammaclass}
Suppose that $M$ is indecomposable, and hence $\Gamma_M$ connected. 
\begin{enumerate}
\item If $M$ is nilpotent, then $\Gamma_M$ is a rooted tree.
\item If $M$ is not nilpotent, then $\Gamma_M$ is obtained by attaching rooted trees to the vertices of a unique oriented cycle $\sigma$. 
\end{enumerate}
\end{proposition}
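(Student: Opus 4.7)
The plan is to treat the proposition as a summary of the three preceding lemmas combined with the cycle-contraction discussion just above the statement, splitting along the dichotomy established at the start of the subsection: for indecomposable $M$, the action of $t$ is either nilpotent or produces an eventually periodic element.

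For part (1), the first step is to rule out oriented cycles in $\Gamma_M$. If $[v_1,\dots,v_k,v_1]$ were such a cycle and $m \in M \setminus \{*\}$ the element attached to $v_1$, then $t^{jk} \cdot m = m \neq *$ for every $j$, contradicting nilpotency. Since every minimal cycle in $\gamma_M$ is correctly oriented in one of the two directions (by the second lemma) and $\Gamma_M$ contains at most one oriented cycle (by the third lemma), we conclude $h_1(\Gamma_M) = 0$, i.e.\ $\Gamma_M$ is a tree. Nilpotency further supplies some nonzero $m$ with $t \cdot m = *$, giving a vertex of type $(a,0)$; Lemma \ref{single_root} makes this vertex unique, and we designate it as the root.

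For part (2), non-nilpotency provides an element $m$ with $t^{n_1}\cdot m = t^{n_2}\cdot m$ for some $n_1 < n_2$. The orbit $\{t^k \cdot m\}_{k \ge 0}$ is finite, and because every vertex in $\Gamma_M$ has out-degree at most one, this orbit is forced to feed into an oriented cycle $\sigma$. By the third lemma, $\sigma$ is the unique oriented cycle and $h_1(\Gamma_M) = 1$. The remaining claim is precisely the content of the paragraph immediately preceding the proposition: contracting $\sigma$ produces a tree $\Gamma_M/\sigma$, and the unique shortest path in $\gamma_M$ from any $v \notin \sigma$ to $\sigma$ is correctly oriented, since otherwise some intermediate vertex would acquire two outgoing edges. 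Hence $\Gamma_M/\sigma$ is a rooted tree at the image $r$ of $\sigma$, and $\Gamma_M$ is recovered by attaching these rooted trees $T_1,\dots,T_l$ to vertices of $\sigma$ through their roots.

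The main obstacle is organizational rather than conceptual: all of the combinatorial content has been packaged in the three preceding lemmas and in the explicit analysis of $\Gamma_M/\sigma$, so the remaining work is to line up the nilpotent/non-nilpotent dichotomy with the two cases $h_1(\Gamma_M) = 0$ and $h_1(\Gamma_M) = 1$, and to verify that nilpotency genuinely forces the existence of a vertex of type $(a,0)$ to serve as the root.
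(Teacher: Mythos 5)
Your proposal is correct and matches the paper's approach: the proposition is stated as a summary ("To summarize") of the nilpotent/non-nilpotent dichotomy, the three preceding lemmas, and the cycle-contraction paragraph, which is exactly how you assemble it. The only additions you make beyond the paper's text --- verifying that nilpotency excludes oriented cycles and supplies the unique vertex of type $(a,0)$ serving as the root --- are correct and fill in details the paper leaves implicit.
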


\bigskip

We proceed to describe the $\fm$--submodules of a $\fm$ module $M$ in terms of $\Gamma_M$. By Remark \ref{subobj}, it suffices to describe the sub-modules of $M$ when $M$ is indecomposable, hence $\Gamma_M$ connected. Such an $N$ then corresponds to an oriented sub-graph $\Gamma_N \subset \Gamma_M$ with the property that any oriented path in $\Gamma_M$ starting at a vertex of $\Gamma_N$ stays in $\Gamma_N$ - this clearly being equivalent to the condition that $N$ is $\fm$--invariant. We call such a $\Gamma_N$ \emph{invariant}.  

\medskip

\begin{definition}
An \emph{admissible cut} on an oriented graph $\Gamma$ is collection $\Phi \subset E(\Gamma)$ of edges of $\Gamma$ such that at most one member of $\Phi$ is encountered at most once along any oriented path in $\Gamma$. An admissible cut is called \emph{simple} if $\Phi$ consists of a single edge. 
\end{definition}

\medskip

\begin{example}
The following are examples of admissible cuts on the graphs from Example \ref{graphs}. The cut edges are indicated by dashed lines. 

\begin{center}          
                 
\begin{pspicture}(7,3)

\psdots(1.5, 1)
\psdots(2.5, 1)
\psdots(2.5, 2)
\psdots(1.5, 2)
\psdots(1,0)
\psdots(0.5,1)
\psdots(3,3)

\psdots(4,0)
\psdots(6,0)
\psdots(5,1)
\psdots(7,1)
\psdots(6,2)

\psline[linestyle=dashed]{->}(4,0)(4.97,0.97)
\psline{->}(6,0)(5.03,0.97)
\psline{->}(5,1)(5.97,1.97)
\psline[linestyle=dashed]{->}(7,1)(6.03,1.97)

\psline[linestyle=dashed]{->}(3,3)(2.57,2.03)
\psline{->}(1.5, 1)(2.45, 1)
\psline{->}(2.5, 1)(2.5, 1.95)
\psline{->}(2.5, 2)(1.55, 2)
\psline{->}(1.5, 2)(1.5, 1.05)
\psline[linestyle=dashed]{->}(1,0)(1.5, 0.97)
\psline{->}(0.5,1)(1.45,1)
\end{pspicture}

\end{center}

\end{example}

\medskip

\begin{remark} \label{nocycleedges}
It follows immediately that an admissible cut may not include any edges that lie along an oriented cycle. 
\end{remark}

\medskip

\begin{lemma}
If $M$ is indecomposable, then sub-modules $N \subset M$ correspond to admissible cuts on $\Gamma_M$.
 \end{lemma}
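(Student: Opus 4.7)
The plan is to exhibit mutually inverse maps between $\fm$-sub-modules of $M$ and admissible cuts of $\Gamma_M$. Given a sub-module $N \subset M$, let $\Phi_N \subset E(\Gamma_M)$ be the set of edges $e$ with $s(e) \notin N$ and $t(e) \in N$, i.e.\ the edges whose source lies outside $N$ but whose target lies inside. Conversely, given an admissible cut $\Phi$, let $N_\Phi$ consist of $*$ together with those $v \in V(\Gamma_M)$ whose forward $t$-orbit $v, t\cdot v, t^2 \cdot v, \ldots$ does not traverse any edge of $\Phi$; since each vertex of $\Gamma_M$ has at most one outgoing edge, this forward orbit is unique.

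First I would verify that each map lands where advertised. For $N \mapsto \Phi_N$: suppose for contradiction that an oriented path $v_0 \to v_1 \to \cdots \to v_k$ contained two distinct edges $(v_i, v_{i+1})$ and $(v_j, v_{j+1})$ of $\Phi_N$ with $i < j$. Then $v_{i+1} \in N$, and $t$-invariance of $N$ forces $v_{i+2}, \ldots, v_j \in N$ as well, while $(v_j, v_{j+1}) \in \Phi_N$ demands $v_j \notin N$, a contradiction. Hence $\Phi_N$ is admissible. For $\Phi \mapsto N_\Phi$: if $v \in N_\Phi$ and $t \cdot v \neq *$, the forward orbit of $t \cdot v$ is a tail of that of $v$ and so a fortiori avoids $\Phi$, hence $t \cdot v \in N_\Phi$; thus $N_\Phi$ is a sub-module.

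Next I would check that the operations are mutually inverse. To see $\Phi_{N_\Phi} = \Phi$: for any edge $e$, the forward orbit of $s(e)$ consists of $e$ followed by the forward orbit of $t(e)$, and by admissibility this orbit meets $\Phi$ at most once; hence $s(e) \in N_\Phi$ iff $e \notin \Phi$ and the orbit of $t(e)$ avoids $\Phi$, while $t(e) \in N_\Phi$ iff the orbit of $t(e)$ avoids $\Phi$, so that $e \in \Phi_{N_\Phi}$ exactly when $e \in \Phi$. For $N \mapsto N_{\Phi_N}$: if $v \in N$, the forward orbit of $v$ lies entirely in $N$ and so crosses no edge with source outside $N$, giving $v \in N_{\Phi_N}$. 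For $v \notin N$ with $N \neq \{*\}$, I would invoke Proposition \ref{gammaclass} and Lemma \ref{single_root}: the forward orbit of $v$ either terminates at the unique root of type $(a,0)$ or enters the unique oriented cycle of $\Gamma_M$, and either of these must lie in $N$ once $N$ is non-zero (in the cyclic case because $t$-invariance forces the whole cycle into $N$ as soon as any of its vertices is, and in the tree case because the root is the eventual image of every vertex already in $N$). Hence the forward orbit of $v$ eventually enters $N$, producing an edge in $\Phi_N$ on that orbit, and so $v \notin N_{\Phi_N}$.

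The main obstacle lies in this last step: one must use the structural content of Proposition \ref{gammaclass} to argue that every non-zero sub-module is ``detected'' by a crossing edge, because all forward orbits are funneled into a common attractor. A small boundary subtlety should be flagged: the trivial sub-module $N = \{*\}$ has $\Phi_N = \emptyset$, and under $\Phi \mapsto N_\Phi$ the empty cut recovers all of $M$; the stated correspondence is thus most naturally read as a bijection between admissible cuts of $\Gamma_M$ and non-zero sub-modules of $M$, with the empty cut identified with $M$ itself.
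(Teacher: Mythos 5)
Your proof is correct and follows essentially the same route as the paper: the same crossing-edge map $N \mapsto \Phi_N$ with the same invariance argument for admissibility, and an inverse that (via your forward-orbit criterion) produces the same submodule as the paper's ``connected component of the root or cycle after deleting $\Phi$'' construction. Your write-up is somewhat more complete, since the paper leaves the mutual-inverse verification and the $N = \{*\}$ versus $N = M$ boundary ambiguity implicit, both of which you address explicitly.
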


\begin{proof}
Suppose $N \subset M$, and let $\Phi_N \subset E(\Gamma_M)$ be the collection of edges joining a vertex of $\Gamma_M \backslash \Gamma_N$ to one in $\Gamma_N$. I claim that $\Phi_N$ is an admissible cut. If not, then there exists an oriented path $\sigma$ in $M$ and two distinct edges $e_1, e_2 \in \Phi_N$ that lie along it, where $e_1$ occurs before $e_2$. Let $\sigma' \subset \sigma$ be the sub-path of $\sigma$ starting at  $t(e_1)$ and ending at $s(e_2)$. The existence of $\sigma'$ contradicts the fact that $\Gamma_N$ is invariant. 

Conversely, suppose that $\Phi \subset E(\Gamma_M)$ is an admissible cut, and $\Gamma'_M$ the oriented graph obtained from $\Gamma_M$ by removing the edges in $\Phi$. Let $\Gamma_N$ be the connected component of $\Gamma'_M$ containing the root (if $\Gamma_M$ is a tree), or a point on the cycle (if $\Gamma_M$ contains an oriented cycle). Then it is clear that $\Gamma_N$ is an invariant sub-graph. 
\end{proof}

\medskip

\begin{remark} \label{simplecuts}
It follows from Remark \ref{nocycleedges} and Proposition \ref{gammaclass} that $\Gamma'_M$ consists of two connected components exactly when the admissible cut $\Phi$ is simple. In this case, denote by $Rt_{\Phi}(M)$ the component of $\Gamma'_M$ containing either the root or cycle, and by $Lf_{\Phi}(M)$ the remaining component. 
\end{remark}

\subsubsection{$\H_{\fm}$ and Kreimer's Hopf algebra of rooted trees}

We may now give a simple description of the Hall algebra $\H_{\fm}$ in terms of the combinatorics of the graphs $\Gamma_M$. Recall that $\H_{\fm} \simeq \mathbb{U}(\n_{\fm})$, where $\n_{\fm}$ is the Lie algebra spanned by $\delta_{\ol{M}}$ for indecomposable $\fm$--modules $M$. Combining equation \ref{deltamult}  and Remark \ref{simplecuts} we obtain:
\begin{equation} \label{treeprod}
\delta_{\ol{M}} \star \delta_{\ol{N}} = \delta_{\ol{M \oplus N}} + \sum_{\ol{R} \in \on{IndMod}} n(R,M,N) \delta_{\ol{R}}
\end{equation}
where $n(R,M,N)$ denotes the number of simple cuts $\Phi$ on $\Gamma_R$ such that $Rt_{\Phi}(R) \simeq \Gamma_N$ and $Lf_{\Phi} \simeq \Gamma_M$, and $\on{IndMod}$ denotes the set of isomorphism classes of indecomposable modules. 

The nilpotent $\fm$--modules form a full subcategory of $\C^N_{\fm}$, closed under extensions, which we denote by $\C^N_{\fm, nil}$. We may therefore consider the Hall algebra $\H_{\fm,nil}$ of $\C^N_{\fm, nil}$. If $M$ is nilpotent, $\Gamma_M$ is a disjoint union of rooted trees, i.e. a \emph{rooted forest}. Rooted forests appeared in the work of Dirk Kreimer as the algebraic backbone of perturbative renormalization in quantum field theory (see \cite{Kre}). In particular, it was shown in \cite{Kre} that they carry a natural structure of a commutative (but not co-commutative) Hopf algebra $\H_{K}$. By the Milnor-Moore theorem, $\H^*_{K}$ is seen to be an enveloping algebra, and comparing the formula \ref{treeprod} with the combinatorial description of the Lie bracket in \cite{Kre}, immediately yields the following:

\begin{theorem}
$\H^*_{K}$ and $\H_{\fm,nil}$ are isomorphic as Hopf algebras. 
\end{theorem}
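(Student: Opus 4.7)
The plan is to reduce the theorem, via Milnor-Moore, to an identification of Lie algebras of primitives, and then verify that identification by direct combinatorics. Both $\H_{\fm,nil}$ and $\H^{*}_{K}$ are graded, connected, cocommutative Hopf algebras---the former by the general discussion preceding the theorem, the latter because $\H_{K}$ itself is commutative---so each is canonically the universal enveloping algebra of its space of primitive elements. It therefore suffices to exhibit a graded Lie algebra isomorphism between $\n_{\fm,nil}$ and $\on{Prim}(\H^{*}_{K})$.

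First, I would set up the dictionary between generators. By Proposition \ref{gammaclass}(1) and Lemma \ref{single_root}, the assignment $M \mapsto \Gamma_{M}$ induces a dimension-preserving bijection between isomorphism classes of indecomposable nilpotent $\fm$-modules and rooted trees, with the unique type $(a,0)$ vertex of $\Gamma_{M}$ serving as root. This identifies the basis $\{\delta_{\ol{M}}\}$ of $\n_{\fm,nil}$ with the standard tree basis of $\on{Prim}(\H^{*}_{K})$.

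Next, I would compare the two brackets. On the Hall side, formula \eqref{treeprod} together with the fact that the term $\delta_{\ol{M \oplus N}}$ is symmetric in $M,N$ and thus drops out of the commutator gives
\[
[\delta_{\ol{M}}, \delta_{\ol{N}}] = \sum_{\ol{R}} \bigl(n(R,M,N) - n(R,N,M)\bigr)\, \delta_{\ol{R}},
\]
where $n(R,M,N)$ counts simple admissible cuts $\Phi$ on $\Gamma_{R}$ with $Rt_{\Phi}(R) \simeq \Gamma_{N}$ and $Lf_{\Phi}(R) \simeq \Gamma_{M}$. On the Kreimer side, dualizing the coproduct $\Delta_{K}(T) = T \otimes 1 + 1 \otimes T + \sum_{c} P^{c}(T) \otimes R^{c}(T)$ over single-edge admissible cuts---the only contributions relevant for the product of two primitives---yields a formally identical bracket on $\on{Prim}(\H^{*}_{K})$, once one identifies the pruned component $P^{c}$ with $Lf_{\Phi}$ and the root component $R^{c}$ with $Rt_{\Phi}$.

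Combining these steps produces a Lie algebra isomorphism $\n_{\fm,nil} \simeq \on{Prim}(\H^{*}_{K})$, which by Milnor-Moore lifts to the desired Hopf algebra isomorphism $\H_{\fm,nil} \simeq \H^{*}_{K}$. The step I expect to be the main obstacle is the combinatorial matching in the previous paragraph: one must verify that a ``simple admissible cut'' in the sense of Remark \ref{simplecuts} agrees on the nose with a single-edge admissible cut in Kreimer's convention (so that no multiplicities are lost or double-counted), and that the dual-basis normalization implicit in $\H^{*}_{K}$ does not introduce automorphism factors $|\on{Aut}(T)|$ that would distort the identification with the Hall algebra $\delta$-basis. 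Once this normalization is pinned down, equality of the structure constants is immediate from the bijective combinatorics.
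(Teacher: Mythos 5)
Your proposal follows essentially the same route as the paper: the paper's argument is precisely to invoke Milnor--Moore on both sides and to match formula \eqref{treeprod} against the combinatorial description of the Lie bracket in Kreimer's Hopf algebra via single-edge admissible cuts. You simply spell out in full the steps (the tree/indecomposable-module dictionary, the cancellation of $\delta_{\ol{M\oplus N}}$ in the commutator, and the normalization check on the dual basis) that the paper compresses into one sentence, and your flagged concern about $|\on{Aut}(T)|$ factors is exactly the point one must verify when comparing conventions with \cite{Kre}.
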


\bigskip

\begin{remark}
It is a classical result (see \cite{S}) that when $\mc{A}$ is the category of finite-dimensional nilpotent $\mathbb{F}_q [t]$--modules, $\H_{\mc{A}}$ is isomorphic to the Hopf algebra of symmetric functions, which we denote by $\Lambda$. As an algebra, $$\Lambda \simeq \mathbb{Q}[e_i], \; i \in \mathbb{N}, $$ where the collection of $e_i$ may be taken to be a variety of symmetric polynomials (for instance, elementary symmetric functions, power sums etc.). From this perspective, rooted forests can be viewed as the $\fun$--analogue of symmetric functions. It is known furthermore \cite{F, Ch} that $\H^{*}_{K}$ is a free non-commutative algebra.  
\end{remark}

\subsubsection{ $\on{Rep}^{\wedge}[\fm], \on{Rep}^{\otimes}[\fm]$ and combinatorial operations on forests}

Given $M, N$ in $\C^N_{\fm}$, it is natural to ask how the graphs $\Gamma_{\ol{M \wedge N}}, \Gamma_{\ol{M \otimes N}}$ are related to $\Gamma_{\ol{M}}$ and $\Gamma_{N}$, i.e. for an explicit description of the ring structure in $\on{Rep}^{\wedge}[\fm], \on{Rep}^{\otimes}[\fm]$. In particular, since the subcategory $\C^N_{\fm, nil}$ is closed under all three operations, we obtain two commutative combinatorial operations on rooted forests. As the examples below suggest, these are non-trivial: 
\begin{align*}
 \psset{levelsep=0.4cm, treesep=0.4cm}
 \pstree{\Tr{\bullet}}{\Tr{\bullet}\Tr{\bullet}} \wedge  \pstree{\Tr{\bullet}}{\pstree{\Tr{\bullet}}{\Tr{\bullet}}} &= \psset{levelsep=0.4cm, treesep=0.4cm}
   \pstree{\Tr{\bullet}}{\Tr{\bullet}\Tr{\bullet}} \oplus 6  \Tr{\bullet}   \\
 \psset{levelsep=0.4cm, treesep=0.4cm}
 \pstree{\Tr{\bullet}}{\Tr{\bullet}\Tr{\bullet}} \wedge  \pstree{\Tr{\bullet}}{\Tr{\bullet}\Tr{\bullet}} &=  \psset{levelsep=0.4cm, treesep=0.4cm}
  \pstree{\Tr{\bullet}}{\Tr{\bullet}\Tr{\bullet}\Tr{\bullet} \Tr{\bullet}} \oplus  4 \Tr{\bullet} \\
   \psset{levelsep=0.4cm, treesep=0.4cm}
 \pstree{\Tr{\bullet}}{\Tr{\bullet}\Tr{\bullet}} \otimes  \pstree{\Tr{\bullet}}{\pstree{\Tr{\bullet}}{\Tr{\bullet}}} &= \psset{levelsep=0.4cm, treesep=0.4cm}
   \pstree{\Tr{\bullet}}{\Tr{\bullet}\Tr{\bullet}}   \\
 \psset{levelsep=0.4cm, treesep=0.4cm}
 \pstree{\Tr{\bullet}}{\Tr{\bullet}\Tr{\bullet}} \otimes \pstree{\Tr{\bullet}}{\Tr{\bullet}\Tr{\bullet}} &=  \psset{levelsep=0.4cm, treesep=0.4cm}
  \pstree{\Tr{\bullet}}{\Tr{\bullet}\Tr{\bullet}\Tr{\bullet} \Tr{\bullet}} 
\end{align*}

% An explicit combinatorial description of $\on{Rep}^{\wedge}[\fm],  \on{Rep}^{\otimes}[\fm]$ will be given in \cite{}.

\subsection{The monoids $\fm/\sim$ } \label{fm_cong}

Recall from Example \ref{cong_ex} that for any $x \in \fm, n \in \mathbb{N}$, the equivalence relation generated by $t^{k+n} \sim t^k x, k \geq 0$ is a congruence. To see that these are all possible congruences on $\fm$, observe that $\A = \fm / \sim$ is naturally a $\fm$-module, generated by $\ol{1}$. $\Gamma_{\fm/\sim}$ therefore has at most one vertex of type $(0,1)$. It follows from the classification of possible $\Gamma$'s above that $\Gamma_{\fm/\sim}$ is either a ladder tree (if $x = 0$), or a cycle with a single (possibly empty) ladder tree attached (if $x \neq 0$). 

$\fm$ maps surjectively to $\A$, and so any $\A$-module is automatically a $\fm$-module ( which has to respect $\sim$). We can thus use the classification of graphs $\Gamma$ above to describe the Lie algebra $\n_{\A}$. For this, we need the notion of \emph{height} of a rooted tree:

\begin{definition}
The \emph{height} of a rooted tree $T$ is the length of the longest path from leaf to root. 
\end{definition}

We distinguish two cases:

\begin{enumerate}
\item {\bf $x=0$}. Then $\ol{t}^n=0$ in $\A$, and so $\ol{t}$ acts nilpotently on any module. If $M$ is an indecomposable $\A$-module, then $\Gamma_M$ is a rooted tree of height $\leq n-1$ and conversely, any such rooted tree corresponds to an indecomposable $\A$--module. 
\item {\bf $x = t^m, \; m \geq 0$} We may assume that $m < n$ (if $m=n$ we get the identity equivalence relation, which was treated in the previous section). If $\ol{t}$ acts nilpotently on an indecomposable module $M$, then the condition $\ol{t}^n = \ol{t}^m$ implies that $\Gamma_M$ is a rooted tree of height at most $n-m-1$. If the action of $\ol{t}$ is not nilpotent, then for any non-zero $x \in M$, $\ol{t}^m x$ must be part of a cycle of length dividing $n-m$. Thus, the possible $\Gamma_M$ are either rooted trees of height at most $n-m-1$ or cycles of length dividing $n-m$ with rooted trees of height at most $m$ attached at the roots. 
\end{enumerate}

\subsection{Quiver representations over $\fun$}

Recall that a quiver $Q$ is a directed graph (which we will assume to be finite). To $Q$ we may attach a finitely generated semigroup $\A(Q)$, with generators $0$, $v_i, i \in V(Q)$ and $e_l, l \in E(Q)$, and relations
\begin{align*}
v_i v_j &= \delta_{i,j} v_i \\
e_l v_i &= 0 \textrm{ unless $i$ is the terminal vertex of $l$, in which case } e_l v_i = e_l \\
v_i e_l &= 0 \textrm{ unless $i$ is the initial vertex of $l$, in which case } v_i e_l  = e_l \\
0 \cdot x &= 0 \textrm{ for any element } x 
\end{align*} 
Informally, the non-zero elements of $\A(Q)$ consist of paths in $Q$ (including the trivial paths $v_i$ corresponding to vertices of $Q$), with multiplication given by concatenation of paths when it makes sense, and $0$ otherwise. Note that $\A(Q)$ does not in general have a unit.

Let $Q_J$ denote the Jordan quiver, consisting of a single vertex $v$ and a single edge (loop) $e$ from $v$ to $v$. We then have $\A(Q_J) \simeq \fm$, and so $\H_{\A(Q_J)}$ is a generalization of $\H_{\fm}$ above. A detailed description of $\H_{\A(Q_J)}$ will be given in \cite{Sz4}. 

\subsection{The monoid $\overline{G}$ and the Burnside ring} \label{G}

Let $G$ be a finite group, and $\Gb = G \cup \{ 0 \}$ the monoid obtained by adjoining to it the absorbing element $0$. We proceed to describe the category $\C^N_{\Gb}$ and its Hall algebra. Let $M$ be a $\Gb$--module, and $M' = M \backslash \{ * \}$ the set obtained by removing the base-point. $M'$ carries an action of $G$, since every element of $G$ has an inverse. It follows that every $\Gb$--module arises from a $G$--set by adjoining a base-point. $M'$ decomposes into a disjoint union $M'=\coprod^k_{i=1} M'_i$ of $G$--orbits, and setting $M_i = M'_i \cup \{ * \}$, we see that $M = \oplus^k_{i=1} M_i$ is the unique (up to permutation) decomposition of $M$ into irreducible hence indecomposable factors. Each orbit $M_i$ is of the form $G/H$ for a subgroup $H \subset G$, and since conjugate subgroups produce isomorphic $\Gb$--modules, we see that the non-trivial irreducible $\Gb$--modules are in bijection with conjugacy classes of subgroups of $G$. The action of $G$ on each orbit is transitive, so the notions of indecomposable and irreducible module are equivalent. 

Let $\mc{C}onj(G)$ denote set of conjugacy classes of subgroups of $G$, and for $i \in \mc{C}onj(G)$, denote by $H_i$ (any) subgroup belonging to the corresponding conjugacy class. Let $M_i = G/H_i \cup \{ * \}$, viewed as a left $\Gb$--module. 

If $N \subset M$ is a $\Gb$--submodule, then defining $N', M'$ as above, and setting $K = M \backslash N'$, we see that $M = N \oplus K$, thus the category $\C^N_{\Gb}$ is semi-simple in the appropriate sense. It follows that every admissible short exact sequence splits, and so if $M$ and $N$ are distinct indecomposable $\Gb$--modules, then in $\H_{\Gb}$, 
\[
\delta_{\ol{M}} \star \delta_{\ol{N}} = \delta_{\ol{M \oplus N}}.
\]
It follows that $\H_{\Gb}$ is free commutative, with generators $\ol{M}_i, \; i \in \mc{C}onj(G)$, and $\n_{\Gb}$ is abelian.

The ring $\on{Rep}^{\wedge}[\Gb]$ is also a free module on the $\ol{M}_i$. If $M_i \simeq G/H_i \cup \{ *\}$ and $M_j \simeq G / H_j \cup \{ * \}$ then $M_i \wedge M_j \simeq G/H_i \times G/H_j \cup \{*\}$, with $\Gb$ acting diagonally. It is therefore exactly the Burnside ring $\Omega(G)$ of G (see \cite{Sol}). $\on{Rep}^{\otimes}[\Gb]$ is not defined unless $G$ is abelian.  

\begin{proposition} Let $G$ be a finite group, and $\Gb$ the monoid $G \cup \{ 0 \}$. 
\begin{enumerate}
\item The category $\C^N_{\Gb}$ is semi-simple, in the sense that any $M \in \C^N_{\Gb}$ can be written uniquely (up to permutation) $$M \simeq \oplus^k_{j=1} M_{i_j}, \; i_j \in \mc{C}onj(G). $$
\item $\H_{\Gb} \simeq \mathbb{Q}[\ol{M}_i], \; i \in \mc{C}onj(G) $.
\item $\on{Rep}^{\wedge}[\Gb] \simeq \Omega(G)$, where $\Omega(G)$ is the Burnside ring of G. 
\end{enumerate}
\end{proposition}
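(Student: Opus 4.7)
The plan is to assemble the three claims from the observations already developed in the paragraphs leading up to the proposition. For part $(1)$, I would begin by noting that every $a \in \Gb \setminus \{0\}$ has an inverse in $G$, so the set $M' := M \setminus \{*\}$ is stable under $G$ and carries an honest $G$-action. Decomposing $M'$ into its $G$-orbits $M' = \coprod_j M'_{i_j}$ and adjoining the basepoint to each orbit yields $\Gb$-submodules $M_{i_j} := M'_{i_j} \cup \{*\}$, each of which is irreducible (hence indecomposable) because $G$ acts transitively on $M'_{i_j}$. The decomposition $M = \bigoplus_j M_{i_j}$ and its uniqueness up to permutation then follow from the Krull-Schmidt proposition proved earlier, combined with the standard bijection sending each transitive $G$-set $G/H$ to the conjugacy class of $H$ in $\mc{C}onj(G)$.

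For part $(2)$, I would first establish semi-simplicity of $\C^N_{\Gb}$: given $N \subset M$, the subset $K := (M' \setminus N') \cup \{*\}$ is again $\Gb$-stable since $G$ permutes $M'$ and preserves its orbit decomposition, giving $M \simeq N \oplus K$. Consequently every admissible short exact sequence splits, so in the Hall product $\delta_{\ol{M_i}} \star \delta_{\ol{M_j}}$ the only contributing $R$ is $M_i \oplus M_j$. Using Remark \ref{subobj} together with the irreducibility of each $M_i$, I would count submodules $L \subset M_i \oplus M_j$ with $L \simeq M_j$ and $(M_i\oplus M_j)/L \simeq M_i$ to obtain
\[
\delta_{\ol{M_i}} \star \delta_{\ol{M_j}} = \begin{cases} \delta_{\ol{M_i \oplus M_j}} & \text{if } i \neq j, \\ 2\, \delta_{\ol{M_i \oplus M_i}} & \text{if } i = j, \end{cases}
\]
and inductively $\delta_{\ol{M_i}}^{\star n} = n!\, \delta_{\ol{M_i^{\oplus n}}}$. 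Since the $\delta_{\ol{M_i}}$ commute, the monomials in $\{\delta_{\ol{M_i}}\}_{i \in \mc{C}onj(G)}$ form a $\mathbb{Q}$-basis of $\H_{\Gb}$ indexed by finite multisets on $\mc{C}onj(G)$, producing the polynomial-algebra isomorphism.

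For part $(3)$, I would compute the smash product directly: $M_i \wedge M_j \simeq (G/H_i \times G/H_j) \cup \{*\}$ with $\Gb$ acting diagonally, so removing the basepoint recovers exactly the Cartesian product of $G$-sets with diagonal $G$-action used to define the Burnside ring. Distributivity of $\wedge$ over $\oplus$ (property $(6)$ of Section \ref{Amodules}) then ensures that $\ol{M}_i \mapsto [G/H_i]$ extends to a well-defined ring map, and surjectivity and injectivity follow from the bijection of part $(1)$. The main obstacle is the delicate counting in part $(2)$: one has to confirm that the factorial factors $n!$ arising from enumerating isomorphic direct summands of $M_i^{\oplus n}$ are exactly what is needed for the naive identification $\ol{M}_i \leftrightarrow \delta_{\ol{M_i}}$ to produce a genuine polynomial ring over $\mathbb{Q}$ rather than a divided-power algebra; this reduces, via Remark \ref{subobj}, to the fact that submodules of a direct sum are themselves direct sub-sums.
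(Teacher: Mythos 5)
Your proof is correct and follows essentially the same route as the paper: the orbit decomposition of $M' = M \setminus \{*\}$ for part (1), the splitting $M \simeq N \oplus K$ with $K = (M' \setminus N') \cup \{*\}$ giving semi-simplicity and hence part (2), and the diagonal-action computation of $M_i \wedge M_j$ for part (3). Your only addition is the explicit treatment of the case $i=j$ (the factor $n!$ in $\delta_{\ol{M_i}}^{\star n} = n!\,\delta_{\ol{M_i^{\oplus n}}}$), which the paper leaves implicit and which, as you note, is harmless over $\mathbb{Q}$.
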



\begin{thebibliography}{99}

\bibitem{Ch} Chapoton, Frederic. Free pre-Lie algebras are free as Lie algebras. Canad. Math. Bull. 53 (2010), no. 3, 425Ð437. 

\bibitem{CC1}  Connes, Alain; Consani, Caterina. On the notion of geometry over $\fun$. Preprint {\tt arXiv:0809.2926 }

\bibitem{CC2} 	Connes, Alain; Consani, Caterina.  Schemes over $\fun$ and zeta functions. Preprint  {\tt arXiv:0903.2024 }

\bibitem{CCM}  Connes, Alain; Consani, Caterina; Marcolli, Matilde Fun with $\Bbb F_1$. J. Number Theory 129 (2009), no. 6, 1532--1561. 

\bibitem{CLS}  Chu, Chenghao;  Lorscheid, Oliver; Santhanam, Rekha; Sheaves and $K$-theory for $\fun$-schemes. Adv. Math. 229, no. 4,2239-2286, (2012).

\bibitem{D} Deitmar, Anton. 
Schemes over $\Bbb F_1$. Number fields and function fields---two parallel worlds, 87--100, 
Progr. Math., 239, Birkh\"auser Boston, Boston, MA, 2005. 

\bibitem{D2}  Deitmar, Anton.  $\Bbb F_1$-schemes and toric varieties. Beitr\"age Algebra Geom. 49 (2008), no. 2, 517--525.

\bibitem{Du} Durov, N. New approach to Arakelov geometry. Preprint {\tt arXiv: 0704.2030}. 

\bibitem{H} Haran, Shai M.J. Non-Additive Prolegomena (to any future Arithmetic that will be able to present itself as a Geometry). Preprint {\tt arXiv:0911.3522 }. 

\bibitem{F} Foissy, Loic. Les algebres de Hopf des arbres enracines, I, Bull. Sci. Math. 126 (2002), no. 3, 193Ð239,

\bibitem{Hub} Hubery, Andrew From triangulated categories to Lie algebras: a theorem of Peng and Xiao. Trends in representation theory of algebras and related topics, 51Ð66, Contemp. Math., 406, Amer. Math. Soc., Providence, RI, 2006,

\bibitem{LPL} L\'opez Pena, Javier; Lorscheid, Oliver. Torified varieties and their geometries over $\fun$. Preprint {\tt arXiv:0903.2173} 

\bibitem{DK} Dyckerhof, T; Kapranov, M. Higher Segal spaces. Work in progress.  

\bibitem{KKM} Kilp, Mati; Knauer, Ulrich ; Mikhalev, Alexander V. Monoids, acts and categories. 
With applications to wreath products and graphs. A handbook for students and researchers. de Gruyter Expositions in Mathematics, 29. Walter de Gruyter \& Co., Berlin, 2000. 

\bibitem{Kre} Dirk Kreimer, On the Hopf Algebra structure of perturbative quantum field theories, Adv. Theory. Math. Phys. 2 (1998), 303Ð334.

\bibitem{KS} Kremnizer, Kobi; Szczesny, Matt. Feynman graphs, rooted trees, and Ringel-Hall algebras. Comm. Math. Phys. 289 (2009), no. 2, 561Ð577.

\bibitem{KapS} Kapranov, M; Smirnov, A. Cohomology, determinants, and reciprocity laws: number field case. Unpublished.  

\bibitem{M} Mahanta, Snigdhayan. $G$-Theory of $\fun$-Algebras I: the Equivariant Nishida Problem. Preprint {\tt arXiv:1110.6001 }

\bibitem{R} Ringel, Claus M.  Hall algebras, Topics in algebra, Part 1 (Warsaw, 1988), 433Ð447, Banach Center Publ., 26, Part 1, PWN, Warsaw, (1990). 

\bibitem{R2} Ringel, Claus M. Hall algebras and quantum groups. Invent. Math. 101 (1990), no. 3, 583--591. 

\bibitem{S} Schiffmann, O. Lectures on Hall algebras. Preprint {\tt math.RT/0611617}. 

\bibitem{Sol} Solomon, L. The Burnside algebra of a finite group.  J. Combin. Theory 1: 603Ð615 (1967).

\bibitem{Sou} C. Soul\'e. Les Vari\'et\'es sur le corps \`a un \'el\'ement. Moscow Math. J. 4, (2004), 217-244.

\bibitem{Sz1}Szczesny, Matt Incidence categories. J. Pure Appl. Algebra 215 (2011), no. 4, 303Ð309. 

\bibitem{Sz2} Szczesny, Matt. Representations of quivers over $\fun$ and Hall algebras. To appear in IMRN. Preprint {\tt arXiv: 1006.0912 }

\bibitem{Sz4} Szczesny, Matt. Representations of quivers over $\fun$ and Hall algebras $II$. In progress. 

\bibitem{Sz3} Szczesny, Matt. On the Hall algebra of coherent sheaves on $\mathbb{P}^1$ over $\fun$. J. Pure Appl. Algebra 216 (2012), no. 3 
662-672.


\bibitem{TV} To\"en, Bertrand; Vaqui\'e, Michel. Au-dessous de ${\rm Spec}\,\Bbb Z$. J. K-Theory 3 (2009), no. 3, 437--500. 


\end{thebibliography}
\end{document}